\newtheorem{theorem}{Theorem}[section]
\newtheorem{lemma}{Lemma}[section]
\numberwithin{equation}{section}
\newcommand{\cP}{\mathcal{P}}
\newcommand{\cU}{\mathcal{U}}
\newcommand{\tF}{\tilde{F}}
\newcommand{\FF}{\mathbb{F}}  
\newcommand{\GL}{\mathrm{GL}}
\newcommand{\U}{\mathrm{U}}
\newcommand{\SL}{\mathrm{SL}}
\newcommand{\PGL}{\mathrm{PGL}}
\newcommand{\SU}{\mathrm{SU}}
\newcommand{\PGU}{\mathrm{PGU}}
\def\adots{\mathinner{\mkern2mu\raise0pt\hbox{.}  % antidiagonal dots
\mkern2mu\raise4pt\hbox{.}\mkern1mu
\raise7pt\vbox{\kern7pt\hbox{.}}\mkern1mu}}
\begin{document}

\bibliographystyle{amsplain}

\title{On the number of real classes in the finite projective linear and unitary groups}
\author{Elena Amparo and C. Ryan Vinroot}
\date{}

\maketitle

\begin{abstract}
We show that for any $n$ and $q$, the number of real conjugacy classes in $\PGL(n, \FF_q)$ is equal to the number of real conjugacy classes of $\GL(n, \FF_q)$ which are contained in $\SL(n, \FF_q)$, refining a result of Lehrer, and extending the result of Gill and Singh that this holds when $n$ is odd or $q$ is even.  Further, we show that this quantity is equal to the number of real conjugacy classes in $\PGU(n, \FF_q)$, and equal to the number of real conjugacy classes of $\U(n, \FF_q)$ which are contained in $\SU(n,\FF_q)$, refining results of Gow and Macdonald.  We also give a generating function for this common quantity.
\\
\\
\noindent 2010 {\it Mathematics Subject Classification: } 20G40, 20E45, 05A15
%\\
%\\
%{\it Key words and phrases: } 
\end{abstract}

\section{Introduction}

It was proved by G. I. Lehrer \cite{Le75} that the number of conjugacy classes in the finite projective linear group $\PGL(n, \FF_q)$ is equal to the number of conjugacy classes of the finite general linear group $\GL(n, \FF_q)$ which are contained in the finite special linear group $\SL(n, \FF_q)$.  I. G. Macdonald \cite{Ma81} showed that the number of conjugacy classes in the finite projective unitary group $\PGU(n, \FF_q)$ is equal to the number of conjugacy classes of the finite unitary group $\U(n, \FF_q)$ which are contained in the finite special unitary group $\SU(n, \FF_q)$ (although this number is different than the number of conjugacy classes in $\PGL(n, \FF_q)$ in general).  

Meanwhile, R. Gow \cite{Go81} considered the number of \emph{real} conjugacy classes in $\GL(n, \FF_q)$ and in $\U(n, \FF_q)$, where a conjugacy class of a finite group $G$ is real if whenever $g$ is in the class, then so is $g^{-1}$.  In particular, Gow noted \cite[pg. 181]{Go81} that the number of real classes of $\GL(n, \FF_q)$ is equal to the number of real classes of $\U(n, \FF_q)$.

More recently, N. Gill and A. Singh \cite{GiSi111, GiSi112} classified the real conjugacy classes of $\PGL(n, \FF_q)$ and $\SL(n, \FF_q)$.  They noted \cite[after Theorem 2.8]{GiSi112} that when $q$ is even or $n$ is odd, the number of real classes of $\PGL(n, \FF_q)$ is equal to the number of real classes of $\GL(n, \FF_q)$ which are contained in $\SL(n, \FF_q)$.  In this paper we prove that this equality holds for all $n$ and $q$.  Moreover, we show that this number is equal to the number of real classes of $\PGU(n, \FF_q)$, and is equal to the number of real classes of $\U(n, \FF_q)$ which are contained in $\SU(n, \FF_q)$.  That is, we extend the result of Gill and Singh, and give a refinement of the results of Lehrer, Macdonald, and Gow.  

This paper is organized as follows.  In Section \ref{Prelims}, we establish notation for partitions and linear and unitary groups over finite fields, and we give an overview of the sets of polynomials over finite fields which we need.  We finish this section with an important enumeration in Lemma \ref{ZetaUSelfCount}.  In Section \ref{RealClasses}, we describe the real conjugacy classes for the groups of interest.  In particular, in Section \ref{RealClassesSLSU} we give known results for the real classes in $\GL(n, \FF_q)$ and $\U(n, \FF_q)$, Gill and Singh's enumeration of the number of real classes of $\GL(n, \FF_q)$ which are contained in $\SL(n, \FF_q)$, and we explain why this is equal to the number of real classes of $\U(n, \FF_q)$ which are contained in $\SU(n ,\FF_q)$.  In Section \ref{RealClassesPGLPGU}, we classify the real classes of $\PGU(n, \FF_q)$ by following the methods of Gill and Singh for $\PGL(n, \FF_q)$.  In Lemma \ref{PGUPGLReal} we give an enumeration of the real classes of $\PGU(n, \FF_q)$, and show this is equal to the number of real classes of $\PGL(n, \FF_q)$.  Finally, in Section \ref{MainResult} we prove our main result in Theorem \ref{main}, where the work left to be done is to prove that the number of real classes of $\PGL(n, \FF_q)$ is equal to the number of real classes of $\GL(n, \FF_q)$ which are contained in $\SL(n, \FF_q)$.  We accomplish this by computing a generating function for each quantity, which has a particularly nice form.
\\
\\
\noindent{\bf Acknowledgements. }  The second-named author was supported in part by a grant from the Simons Foundation, Award \#280496.

\section{Preliminaries} \label{Prelims}

For positive integers $n, m$, we denote their greatest common divisor by $(n, m)$.  We let $|n|_2$ denote the largest power of $2$ which divides $n$, or the $2$-part of $n$.  If $G$ is a group with $g \in G$, then $|g|$ will denote the order of the element $g$, and $|g|_2$ will denote the $2$-part of the order of $g$.

\subsection{Partitions}  Given an integer $n \geq 0$, we denote a \emph{partition} $\nu$ of $n$ as 
$$ \nu = (1^{m_1} 2^{m_2} 3^{m_3} \cdots), $$
such that $\sum_{i \geq 1} im_i = n$.  Each integer $m_i = m_i(\nu) \geq 0$ is the \emph{multiplicity} of the part $i$ in $\nu$.  We can also denote the partition $\nu$ by
$$ \nu = (\nu_1, \nu_2, \ldots, \nu_l),$$
such that $\sum_{j=1}^l \nu_j = n$, and $\nu_j \geq \nu_{j+1} \geq 0$ for $j < l$.  Then we have $m_i(\nu)$ is the number of $j$ such that $\nu_j = i$.  We also assume each $\nu_j > 0$ unless $n=0$, in which the unique partition of $0$ is considered the empty partition.  We let $\cP_n$ denote the collection of all partitions of $n$.

\subsection{Linear and unitary groups over finite fields}  For any prime power $q$, we let $\FF_q$ denote a finite field with $q$ elements, and we fix an algebraic closure $\bar{\FF}_q$.  We let $\FF_q^{\times}$ and $\bar{\FF}_q^{\times}$ denote the multiplicative groups of nonzero elements in these fields.

Let $\GL(n, \bar{\FF}_q)$ denote the group of invertible $n$-by-$n$ matrices over $\bar{\FF}_q$, and we identify $\GL(1, \bar{\FF}_q )$ with $\bar{\FF}_q^{\times}$.  Define the standard Frobenius map $F$ on $\GL(n, \bar{\FF}_q)$ by $F((a_{ij}))=(a_{ij}^q)$, and so the fixed points of $F$ give the general linear group over $\FF_q$:
$$ \GL(n, \bar{\FF}_q)^F = \GL(n, \FF_q).$$
We let $\SL(n, \FF_q)$ denote the special linear group over $\FF_q$, or the elements of determinant $1$ in $\GL(n, \FF_q)$.  The center of $\GL(n, \FF_q)$ is the group of scalar matrices, which is isomorphic to $\FF_q^{\times}$.  We identify the scalar matrices with the group $\FF_q^{\times}$ by a slight abuse of notation.  The projective linear group, which we denote by $\PGL(n, \FF_q)$, is the general linear group modulo its center:
$$ \PGL(n, \FF_q) = \GL(n, \FF_q)/\FF_q^{\times}.$$

Define the map $\tF$ on $\GL(n, \bar{\FF}_q)$ by composing $F$ with the inverse-transpose map, so for $(a_{ij}) \in \GL(n, \bar{\FF}_q)$, we have
 $$\tF((a_{ij})) = {^\top (a_{ij}^q)}^{-1} = (a_{ji}^q)^{-1}.$$
We define the unitary group over $\FF_q$, which we denote by $\U(n, \FF_q)$, to be the group of $\tF$-fixed points in $\GL(n, \bar{\FF}_q)$:
$$ \GL(n, \bar{\FF}_q)^{\tF} = \U(n, \FF_q).$$
Alternatively, one can define $\U(n, \FF_q)$ to be the group of $\tF$-fixed points in $\GL(n, \FF_{q^2})$, which is also the isometry group of the Hermitian form on the vector space $\FF_{q^2}^n$ defined by $\langle v, w \rangle = {^\top v} F(w)$, where $v, w$ are viewed as coordinate vectors and $F$ is the $q$-power map on coordinates.  We identify $\U(1, \FF_q)$ with $(\bar{\FF}_q^{\times})^{\tF}$, which is the multiplicative subgroup of $\FF_{q^2}^{\times}$ of order $q+1$.  Denote this cyclic group by $C_{q+1}$.

The special unitary group $\SU(n, \FF_q)$ is then defined as the group of determinant $1$ elements in $\U(n, \FF_q)$.  The center of $\U(n, \FF_q)$ is the group of scalar matrices with diagonal entries from $C_{q+1}$, and we again identify this group of scalars with $C_{q+1}$.  The projective unitary group $\PGU(n, \FF_q)$ is the unitary group modulo its center, that is,
$$\PGU(n, \FF_q) = \U(n, \FF_q)/C_{q+1}.$$

When $n=0$, we take each of the linear and unitary groups described above to be the group with one element.

\subsection{Polynomials over finite fields} \label{Polynomials}

In this section we define several sets of polynomials over finite fields which we need in order to describe conjugacy classes.  Let $t$ be an indeterminate, and for a finite field $\FF_q$ we let $\FF_q[t]$ denote the collection of polynomials in $t$ with coefficients from $\FF_q$.  We will primarily be interested in monic polynomials with nonzero constant term, and so we denote this collection of polynomials over $\FF_q$ by $M_q[t]$.  

Given a polynomial $f(t) \in M_q[t]$ with $\deg(f(t)) = d$, we define the \emph{reciprocal polynomial} of $f(t)$, denoted by $f^*(t)$, by 
$$ f^*(t) = f(0)^{-1} t^d f(t^{-1}),$$
so if $f(t) = t^d + a_{d-1} t^{d-1} + \cdots + a_1 t + a_0$, then $f^*(t) = t^d + a_1 a_0^{-1} t^{d-1} + \cdots + a_{d-1}a_0^{-1} t + a_0^{-1}$.  A polynomial $f(t) \in M_q[t]$ and its reciprocal $f^*(t)$ have the relationship that $\alpha \in \bar{\FF}_q^{\times}$ is a root of $f(t)$ if and only if $\alpha^{-1}$ is a root of $f^*(t)$ (with the same multiplicity).  A polynomial $f(t) \in M_q[t]$ is called \emph{self-reciprocal} when $f(t) = f^*(t)$, or when $\alpha \in \bar{\FF}_q^{\times}$ is a root of $f(t)$ if and only if $\alpha^{-1}$ is a root with the same multiplicity.  Note that the constant term is $a_0 = \pm 1$ necessarily for a self-reciprocal polynomial.

Now let $r_{q, d}$ denote the number of self-reciprocal polynomials in $M_q[t]$ of degree $d$.  As given in \cite[Lemma 2.1]{GiSi111} and \cite[Lemma 1.3.15(b)]{FNP05}, we have for any prime power $q$ and for $d >0$,
\begin{equation} \label{SelfRecCount}
r_{q,d} = \left\{ \begin{array}{ll} 2 q^{(d-1)/2} & \text{if }q\text{ is odd and }d\text{ is odd,} \\
(q+1)q^{(d/2)-1} & \text{if }q\text{ is odd and }d\text{ is even,} \\
q^{(d-1)/2} & \text{if }q\text{ is even and }d\text{ is odd,}\\
q^{d/2} & \text{if }q\text{ is even and }d\text{ is even.}
\end{array} \right.
\end{equation}
Or, more compactly, if we set $e = e(q) = (2, q-1)$, so $e = 1$ if $q$ is even and $e=2$ if $q$ is odd, then for $d > 0$ we have $r_{q, d} = q^{\lfloor d/2 \rfloor} + (e-1) q^{\lfloor (d-1)/2 \rfloor}$.  For any prime power $q$, we take $r_{q,0} = 1$.

Given any polynomial $f(t) \in M_{q^2}[t]$, define $f^{[q]}(t)$ by applying the $q$-power map (the Frobenius map) to each coefficient of $f(t)$.  So $\alpha \in \bar{\FF}_q^{\times}$ is a root of $f(t)$ if and only if $\alpha^{q}$ is a root of $f^{[q]}(t)$ with the same multiplicity, and $f(t) = f^{[q]}(t)$ if and only if $f(t) \in M_q[t]$.  If $f(t) \in M_{q^2}[t]$ with $\deg(f(t)) = d$, define the \emph{$\sim$-conjugate polynomial} of $f(t)$, written as $\tilde{f}(t)$, by
$$ \tilde{f}(t) = f(0)^{-q} t^d f^{[q]}(t^{-1}).$$
So if $f(t) = t^d + a_{d-1} t^{d-1} + \cdots + a_1 t + a_0$, then $\tilde{f}(t) = t^d + (a_1 a_0^{-1})^q t^{d-1} + \cdots + (a_{d-1} a_0^{-1})^q t + a_0^{-q}$.  In particular, $\alpha \in \bar{\FF}_q^{\times}$ is a root of $f(t)$ if and only if $\alpha^{-q}$ is a root of $\tilde{f}(t)$ with the same multiplicity.  A polynomial $f(t) \in M_{q^2}[t]$ is \emph{self-conjugate} if $f(t) = \tilde{f}(t)$.  Define $\cU_q[t]$ to be the collection of self-conjugate polynomials in $M_{q^2}[t]$, so
$$ \cU_q[t] = \{ f(t) \in M_{q^2}[t] \, \mid \, f(t) = \tilde{f}(t) \}.$$

Now consider some $f(t) \in \cU_q[t]$ which is also self-reciprocal.  Then for any $\alpha \in \bar{\FF}_q^{\times}$, we have $\alpha$ is a root of $f(t)$ if and only if $\alpha^{-1}$ is, if and only if $\alpha^{-q}$ is, all of the same multiplicity.  But then $\alpha^{-q}$ is a root if and only if $\alpha^q$ is, since $f(t)$ is self-reciprocal, which implies $\alpha$ is a root of $f(t)$ if and only if $\alpha^q$ is (of the same multiplicity), and it follows that $f(t)$ must be a self-reciprocal polynomial in $M_q[t]$.  That is, we have
\begin{equation} \label{SelfDualSets}
\left\{ f(t) \in \cU_q[t] \, \mid \, f(t) = f^*(t) \right\} = \left\{ f(t) \in M_q[t] \, \mid \, f(t)=f^*(t) \right\} = \cU_q[t] \cap M_q[t],
\end{equation}
and the number of polynomials of degree $d$ in this set is equal to $r_{q,d}$ given by \eqref{SelfRecCount}.

Now let $f(t) \in M_q[t] \cup \cU_q[t]$, with $\zeta \in \FF_q^{\times}$ if $f(t) \in M_q[t]$, and $\zeta \in C_{q+1}$ if $f(t) \in \cU_q[t]$.  With a fixed $\zeta$ and $\deg(f(t)) = d$, we define the \emph{$\zeta$-reciprocal polynomial} of $f(t)$, written as $\hat{f}(t)$, by 
$$\hat{f}(t) = f(0)^{-1} t^d f(\zeta t^{-1}),$$
so if $f(t) = t^d + a_{d-1} t^{d-1} + \cdots + a_1 t + a_0$, then $\hat{f}(t) = t^d + a_1\zeta a_0^{-1} t^{d-1} + \cdots + a_{d-1} \zeta^{d-1} a_0^{-1} t + \zeta^d a_0^{-1}$.  The polynomial $f(t)$ is \emph{$\zeta$-self-reciprocal} if $f(t) = \hat{f}(t)$, which is equivalent to the statement that $\alpha \in \bar{\FF}_q^{\times}$ is a root of $f(t)$ if and only if $\zeta \alpha^{-1}$ is a root of the same multiplicity.

We will be interested in $\zeta$-reciprocal polynomials in the case that $\zeta$ is not a square in $\FF_q^{\times}$ or $C_{q+1}$, respectively, which means we will only be concerned in the case that $q$ is odd.  If $\zeta$ is not a square in $\FF_q^{\times}$, let $r^{\zeta}_{q,d}$ denote the number of $\zeta$-self-reciprocal polynomials in $M_q[t]$ of degree $d$.  Gill and Singh \cite[Lemma 2.2]{GiSi111} prove that
\begin{equation} \label{ZetaSelfCount}
r^{\zeta}_{q,d} = \left\{ \begin{array}{ll} r_{q, d} & \text{if } d \text{ is even,} \\
0 & \text{if } d \text{ is odd.} 
\end{array}\right.
\end{equation}
We need the following analogue of this statement for $\zeta$-self-reciprocal polynomials in $\cU_q[t]$.

\begin{lemma} \label{ZetaUSelfCount}
Let $q$ be odd and $\zeta \in C_{q+1}$ be a non-square.  The number of $\zeta$-self-reciprocal polynomials in $\cU_q[t]$ of degree $d$ is equal to $r^{\zeta}_{q,d}$, given in \eqref{ZetaSelfCount}.
\end{lemma}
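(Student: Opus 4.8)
The plan is to adapt to $\cU_q[t]$ the polynomial‑factorization argument that underlies \eqref{ZetaSelfCount}, handling the cases $d$ odd and $d$ even separately. For $d$ odd I would show, as in \eqref{ZetaSelfCount}, that there are no such polynomials at all. If $f(t)\in\cU_q[t]$ has degree $d$ and constant term $a_0$, then $f(t)=\tilde f(t)$ forces $a_0=a_0^{-q}$, so $a_0\in C_{q+1}$, while $f(t)=\hat f(t)$ forces, on comparing constant terms, $a_0^2=\zeta^d$. Since $\zeta$ is a non-square in the cyclic group $C_{q+1}$ of even order $q+1$, the element $\zeta^d$ is again a non-square there when $d$ is odd, so $a_0^2=\zeta^d$ has no solution with $a_0\in C_{q+1}$; hence no such $f$ exists, matching $r^{\zeta}_{q,d}=0$.

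Now let $d=2m$. The key point is that, although $\zeta$ is a non-square in $C_{q+1}$, its order divides $q+1$ and therefore divides $(q^2-1)/2$, so $\zeta$ \emph{is} a square in $\FF_{q^2}^{\times}$: fix $\beta\in\FF_{q^2}^{\times}$ with $\beta^2=\zeta$, and note $\beta\notin C_{q+1}$ and $\beta^{-q}=-\beta$. Then every $\zeta$-self-reciprocal $f(t)\in\cU_q[t]$ of degree $2m$ factors uniquely as $f(t)=(t^2-\zeta)^a g_0(t)$, where $a$ is the common multiplicity of $\beta$ and $-\beta$ as roots of $f$ (equal, since $f$ is self-conjugate and $\beta^{-q}=-\beta$) and $g_0(t)\in\cU_q[t]$ is $\zeta$-self-reciprocal of degree $2(m-a)$ and coprime to $t^2-\zeta$. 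Pairing the roots $\alpha_i$ of $g_0$ with $\zeta\alpha_i^{-1}$ (so $\alpha_i\neq\pm\beta$) gives $g_0(t)=\prod_i(t^2-s_it+\zeta)$ with $s_i=\alpha_i+\zeta\alpha_i^{-1}$, so $g_0$ is determined by the multiset $\{s_i\}$.

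The heart of the matter is to rewrite the conditions on $g_0$ in terms of $\{s_i\}$. A direct root computation shows $g_0$ is self-conjugate exactly when $\{s_i\}$ is stable under $s\mapsto\zeta s^q$; choosing $\gamma\in\FF_{q^2}^{\times}$ with $\gamma^{q-1}=\zeta^{-1}$ (possible since $\zeta^{-1}\in C_{q+1}$), this becomes stability of $\{\gamma^{-1}s_i\}$ under the $q$-power map, that is, $h(t):=\prod_i(t-\gamma^{-1}s_i)$ is a monic polynomial over $\FF_q$ of degree $m-a$. Similarly, $g_0$ is coprime to $t^2-\zeta$ exactly when $h$ is not divisible by $\mu(t):=t^2-4\zeta\gamma^{-2}$, and one checks that $\mu(t)\in\FF_q[t]$ is irreducible, its root $2\beta\gamma^{-1}$ being negated by the $q$-power map and hence not lying in $\FF_q$. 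This yields a bijection between the $\zeta$-self-reciprocal polynomials in $\cU_q[t]$ of degree $2m$ and the pairs $(a,h)$ with $0\le a\le m$ and $h$ a monic polynomial over $\FF_q$ of degree $m-a$ with $\mu\nmid h$. Writing $c_n$ for the number of monic polynomials over $\FF_q$ of degree $n$ not divisible by a fixed irreducible quadratic (so $c_n=q^n$ for $n\le 1$ and $c_n=q^n-q^{n-2}$ for $n\ge 2$), the desired count equals $\sum_{a=0}^m c_{m-a}$, which telescopes to $q^m+q^{m-1}=r_{q,2m}$ for $m\ge 1$ (and to $1=r_{q,0}$ for $m=0$); by \eqref{SelfRecCount} and \eqref{ZetaSelfCount} this is exactly $r^{\zeta}_{q,2m}$.

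I expect the main obstacle to be the middle of the previous paragraph: establishing that self-conjugacy of $g_0$ is precisely the $\zeta$-twisted Frobenius condition $s\mapsto\zeta s^q$ on $\{s_i\}$, rescaling by $\gamma$ to turn it into honest $\FF_q$-rationality of the coefficient polynomial $h$, and verifying that the exceptional quadratic $\mu$ is genuinely irreducible over $\FF_q$. This last check, together with the constant-term argument in the odd case, is where the hypothesis that $\zeta$ is a non-square is really used. (It may be cleaner to phrase the final count via generating functions: both $\sum_d r^{\zeta}_{q,d}u^d$ and $\sum_m\big(\sum_{a=0}^m c_{m-a}\big)u^{2m}$ equal $\tfrac{1+u^2}{1-qu^2}$.)
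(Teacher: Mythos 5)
Your proposal is correct, and for the even-degree case it takes a genuinely different route from the paper. The paper's proof is a direct coefficient computation: writing $f(t)=t^d+a_{d-1}t^{d-1}+\cdots+a_0$, the two conditions $\hat{f}=f$ and $\tilde{f}=f$ combine to give $a_0^2=\zeta^d$ with $a_0\in C_{q+1}$ (which kills the odd-$d$ case exactly as in your first paragraph) and, for $0<i<d$, the single relation $a_i=\zeta^{d-i}a_i^q$, which has $q$ solutions in $\FF_{q^2}$ for each $i$; the count $q^{d/2}+q^{(d/2)-1}$ then falls out by splitting into the cases $a_0=\pm\zeta^{d/2}$, where the sign forces $a_{d/2}=0$ or leaves it free. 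Your argument instead works with roots: you factor off $(t^2-\zeta)^a$ at the two fixed points $\pm\beta$ of $\alpha\mapsto\zeta\alpha^{-1}$, package the remaining roots into quadratics $t^2-s_it+\zeta$, and translate self-conjugacy into Frobenius-stability of $\{\gamma^{-1}s_i\}$, yielding a bijection with pairs $(a,h)$ and a telescoping count. I checked the steps you flagged as delicate and they all go through: $\tilde{\ }$ sends $t^2-st+\zeta$ to $t^2-\zeta s^qt+\zeta$ (so the twisted-Frobenius condition on $\{s_i\}$ is exactly self-conjugacy of $g_0$), the rescaling by $\gamma$ with $\gamma^{q-1}=\zeta^{-1}$ works since the image of the $(q-1)$-power map is $C_{q+1}$, and $\mu$ is irreducible because $(2\beta\gamma^{-1})^q=-2\beta\gamma^{-1}$. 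The paper's computation is shorter and needs no structure theory; your version is longer to write out in full but is bijective and explains structurally why the answer coincides with $r_{q,d}$, mirroring the classical factorization proof of \eqref{SelfRecCount} itself, and it packages naturally as the generating function identity you note at the end.
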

\begin{proof}  Let $f(t) \in \cU_q[t]$ be $\zeta$-self-reciprocal of degree $d$, with $f(t) = t^d + a_{d-1} t^{d-1} + \cdots + a_1 t + a_0$.  Since $\hat{f}(t) = f(t)$, then we have $a_0 = \zeta^d a_0^{-1}$, so $a_0^2 = \zeta^d$.  Since also $\tilde{f}(t) = f(t)$, we have $a_0^{-q} = a_0$, so that $a_0 \in C_{q+1}$.  Since $\zeta$ is a non-square in $C_{q+1}$, then this is impossible with $d$ odd, and so there are no such polynomials in this case.

We now assume $d$ is even, and from $a_0^2 = \zeta^d$ we have $a_0 = \pm \zeta^{d/2}$, and for any $0 \leq i \leq d$ we have $a_0 \zeta^{i-d} = a_0^{-1} \zeta^i$.  From the fact that $\hat{f}(t) = f(t)$ and $\tilde{f}(t) = f(t)$, we know that for $0 < i < d$,
\begin{equation} \label{coeffs}
a_i = a_0 a_{d-i} \zeta^{-i} \quad \text{and} \quad a_{d-i} = a_i^q a_0^{-q} = a_i^q a_0.
\end{equation}
Note that if $a_i \in \FF_{q^2}$ is chosen to satisfy the equations above for $0 < i \leq d/2$, then $a_{d-i}$ is determined for $d/2 < i < d$.  Substituting the second equation of \eqref{coeffs} into the first yields
$$ a_i = a_0^2 \zeta^{-i} a_i^q = \zeta^{d-i} a_i^q.$$
For any $i$, $0 < i < d$, there are $q$ solutions to this equation in $\FF_{q^2}$, given by either $a_i = 0$, or the $q-1$ solutions to $a_i^{q-1} = \zeta^{i-d}$ if $a_i \neq 0$.

Suppose first that $a_0 = -\zeta^{d/2}$.  Then for $i = d/2$, the first equation in \eqref{coeffs} gives $a_{d/2} = -a_{d/2}$, so that $a_{d/2}=0$ necessarily.  Given that there are $q$ possibilities for each of $a_i$ for $0 < i < d/2$, we have a total of $q^{(d/2) - 1}$ polynomials in this case.

If $a_0 = \zeta^{d/2}$, then there is no such restriction on $a_{d/2}$, and there are $q$ possibilities for its value.  Taking the $q$ possible values for $a_i$ with $0 < i < d/2$, the polynomial $f(t)$ is determined, and there are $q^{d/2}$ possibilites.  This gives a total of $q^{(d/2)- 1} + q^{d/2} = r_{q,d}$ polynomials of degree $d$ which are $\zeta$-self-reciprocal in $\cU_q[t]$ when $d$ is even, as claimed.
\end{proof}

\section{Real Conjugacy Classes} \label{RealClasses}

\subsection{Conjugacy classes and real classes in $\GL(n, \FF_q)$ and $\U(n, \FF_q)$} \label{RealClassesSLSU}

The conjugacy classes of $\GL(n,\FF_q)$ may be parameterized by sequences of polynomials, 
\begin{equation} \label{GLclassparam}
 (f_1(t), f_2(t), \ldots), \text{ with } \, f_i(t) \in M_q[t] \text{ such that } \, \sum_{i \geq 1} i\deg(f_i(t)) = n,
\end{equation}
as explained by Macdonald \cite[Section 1]{Ma81}.  In fact, Macdonald uses sequences of polynomials with constant term $1$ instead of monic polynomials.  This may be seen to be equivalent to \eqref{GLclassparam} by replacing the polynomial $f_i(t) = \prod_{j=1}^d (t - \alpha_j)$ with the polynomial $\prod_{j=1}^d (1 - t \alpha_j)$.  In the parametrization \eqref{GLclassparam}, for any element $g \in \GL(n, \FF_q)$ in the conjugacy class corresponding to the sequence $(f_1(t), f_2(t), \ldots)$, the characteristic polynomial of $g$ is given by $\prod_{i \geq 1} f_i(t)^i$.

As given in \cite[Section 6]{Ma81}, the conjugacy classes of $\U(n, \FF_q)$ may be similarly parameterized by sequences of polynomials
\begin{equation} \label{Uclassparam}
(f_1(t), f_2(t), \ldots), \text{ with } \, f_i(t) \in \cU_q[t], \text{ such that } \, \sum_{i \geq 1} i\deg(f_i(t)) = n,
\end{equation}
where the characteristic polynomial of any element in this class is given by $\prod_{i \geq 1} f_i(t)^i$.

For the conjugacy class of $\GL(n, \FF_q)$ or $\U(n, \FF_q)$ corresponding to the sequence of polynomials $(f_1(t), f_2(t), \ldots)$, we may define the partition 
\begin{equation} \label{PartitionClass}
\nu = (1^{m_1} 2^{m_2} 3^{m_3} \cdots), \text{ where } \, \deg(f_i(t)) = m_i,
\end{equation}
corresponding to this conjugacy class, where $\nu$ is a partition of $n$.  Fixing a partition $\nu \in \cP_n$, a conjugacy class of $\GL(n, \FF_q)$ or $\U(n, \FF_q)$ is said to be a conjugacy class of \emph{type $\nu$} if the class corresponds to the partition $\nu$ as given by \eqref{PartitionClass}.

An element $g$ of a group $G$ is said to be \emph{real} if $g$ is conjugate to $g^{-1}$ in $G$.  If the element $g$ is real, then all elements in the conjugacy class of $g$ are real, in which case we call this a \emph{real conjugacy class} of $G$.  A conjugacy class of $\GL(n, \FF_q)$ corresponding to $(f_1(t), f_2(t), \ldots)$ is a real class if and only if each $f_i(t)$ is self-reciprocal \cite[Proposition 3.7]{GiSi111}.  The same statement is true for real conjugacy classes of $\U(n, \FF_q)$ parameterized by \eqref{Uclassparam}, as explained in \cite[Section 5.2]{GaSiVi14}.  Since the set of self-reciprocal polynomials in $M_q[t]$ is the same as the set of self-reciprocal polynomials in $\cU_q[t]$ as in \eqref{SelfDualSets}, then the real classes of $\GL(n, \FF_q)$ and of $\U(n, \FF_q)$ may be parameterized by exactly the same sequences of polynomials, a fact which reflects the observation of Gow \cite[pg. 181]{Go81} that these classes are equal in number.  

Let $r_{q, d}$ be the number of self-reciprocal polynomials in $M_q[t]$ (or in $\cU_q[t]$) of degree $d$, as given in \eqref{SelfRecCount}.  By considering the number of real conjugacy classes of type $\nu$ for each partition $\nu$, where $\nu = (1^{m_1} 2^{m_2} 3^{m_3} \cdots)$, the number of real classes in $\GL(n, \FF_q)$ or in $\U(n, \FF_q)$ is given by the coefficient of $u^n$ in the generating function 
\begin{equation} \label{RealClassesGen}
\sum_{n \geq 0} \left( \sum_{\nu \in \cP_n} \prod_{i: m_i >0} r_{q, m_i} \right) u^n = \prod_{i = 1}^{\infty} \left( \sum_{k \geq 0} (u^i)^k r_{q, k} \right) = \prod_{i =1}^{\infty} \frac{(1+u^i)^e}{1-qu^{2i}},
\end{equation}
where $e = e(q) = (q-1, 2)$ (see \cite[Theorem 3.8]{GiSi111} and \cite[Theorem 2.9]{Go81}).

Next consider those real classes of $\GL(n, \FF_q)$ or $\U(n, \FF_q)$ which are contained in $\SL(n, \FF_q)$ or $\SU(n, \FF_q)$, respectively.  Since an element $g$ of the conjugacy class parameterized by the sequence $(f_1(t), f_2(t), \ldots)$ has characteristic polynomial $\prod_{i \geq 1} f_i(t)^i$, which has constant term $(-1)^n \det(g)$, then elements of this class have determinant $1$ exactly when this constant term is $(-1)^n$.  That is, a real class of $\GL(n, \FF_q)$ or of $\U(n, \FF_q)$ which is contained in $\SL(n, \FF_q)$ or in $\SU(n, \FF_q)$, respectively, corresponds to a sequence $(f_1(t), f_2(t), \ldots)$ of self-reciprocal polynomials such that $\prod_{i \geq 1} f_i(0)^i = (-1)^n$.  In particular, we have the following observation.

\begin{lemma} \label{GLSLGUSU} Let $n \geq 1$ and let $q$ be any prime power.  Then the number of real classes of $\GL(n, \FF_q)$ contained in $\SL(n, \FF_q)$ is equal to the number of real classes of $\U(n, \FF_q)$ contained in $\SU(n, \FF_q)$.
\end{lemma}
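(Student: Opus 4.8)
The plan is to observe that both quantities are counted by literally the same combinatorial data, so that the identity map furnishes the required bijection. From the discussion preceding the lemma, a real conjugacy class of $\GL(n, \FF_q)$ which is contained in $\SL(n, \FF_q)$ corresponds to a sequence $(f_1(t), f_2(t), \ldots)$ with each $f_i(t) \in M_q[t]$ self-reciprocal, satisfying $\sum_{i \geq 1} i \deg(f_i(t)) = n$ and $\prod_{i \geq 1} f_i(0)^i = (-1)^n$; likewise a real class of $\U(n, \FF_q)$ contained in $\SU(n, \FF_q)$ corresponds to such a sequence with each $f_i(t) \in \cU_q[t]$ self-reciprocal and the same two constraints. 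So I would begin by recording these two parametrizations explicitly.

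Next I would invoke \eqref{SelfDualSets}: the set of self-reciprocal polynomials in $M_q[t]$ and the set of self-reciprocal polynomials in $\cU_q[t]$ coincide, both being equal to $\cU_q[t] \cap M_q[t]$. Since any self-reciprocal polynomial has constant term $\pm 1$, the condition $\prod_{i \geq 1} f_i(0)^i = (-1)^n$ imposes exactly the same restriction on the two families of sequences. Hence the set of sequences parametrizing the real classes of $\GL(n, \FF_q)$ inside $\SL(n, \FF_q)$ is identical to the set parametrizing the real classes of $\U(n, \FF_q)$ inside $\SU(n, \FF_q)$, and in particular these two sets of classes are equinumerous, which is the assertion of the lemma.

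There is essentially no obstacle here: the content of the lemma is already contained in the setup, and the only point requiring any care is checking that the determinant condition translates to the identical constant-term condition $\prod_{i} f_i(0)^i = (-1)^n$ in both the linear and the unitary cases (which it does, since in each case an element of the class parametrized by $(f_1(t), f_2(t), \ldots)$ has characteristic polynomial $\prod_{i \geq 1} f_i(t)^i$ with constant term $(-1)^n \det(g)$). This lemma then reduces the main theorem to comparing the number of real classes of $\GL(n, \FF_q)$ contained in $\SL(n, \FF_q)$ with the number of real classes of $\PGL(n, \FF_q)$, which is handled later via generating functions.
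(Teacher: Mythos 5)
Your proposal is correct and is exactly the argument the paper intends: the lemma is stated as an immediate observation following the discussion that real classes in both groups are parametrized by sequences of self-reciprocal polynomials (the same set in $M_q[t]$ and $\cU_q[t]$ by \eqref{SelfDualSets}) subject to the identical constant-term condition $\prod_{i \geq 1} f_i(0)^i = (-1)^n$. No further comment is needed.
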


For any partition $\nu$ of $n$, let $sl_{\nu}$ denote the number of real classes of type $\nu$ in $\GL(n, \FF_q)$ which are contained in $\SL(n, \FF_q)$.  Gill and Singh \cite[Proposition 4.1]{GiSi111} compute $sl_{\nu}$ to be
\begin{equation} \label{slnucount}
sl_{\nu} = \left\{ \begin{array}{ll} \displaystyle\prod_{i: m_i > 0} r_{q, m_i} & \text{if } q \text{ is even or } m_i = 0 \text{ for } i \text{ odd,} \\
\displaystyle\frac{1}{2} \prod_{i: m_i > 0} r_{q, m_i} & \text{if } q \text{ is odd or } im_i \text{ is odd for some }i, \\
\displaystyle h_{\nu}(q) \prod_{i \text{ odd}: \atop{m_i > 0}} q^{(m_i/2)-1} \prod_{i \text{ even}:\atop{m_i > 0}} r_{q, m_i} & \text{otherwise},
\end{array} \right.
\end{equation}
with $h_{\nu}(q) = \frac{1}{2} ( (q+1)^{\rho} + (q-1)^{\rho})$, where $\rho$ is the number of odd $i$ such that $m_i > 0$.  We simplify this expression a bit as follows.  

\begin{lemma} \label{SLnuRevised}
If $q$ is even, then the number of real conjugacy classes of $\GL(n, \FF_q)$ is equal to the number of real conjugacy classes of $\GL(n, \FF_q)$ contained in $\SL(n, \FF_q)$.  If $q$ is odd, the number of real conjugacy classes of $\GL(n, \FF_q)$ of type $\nu$ which are contained in $\SL(n, \FF_q)$ is given by
$$sl_{\nu} = \left\{ \begin{array}{ll} \displaystyle \frac{1}{2} \prod_{i: m_i > 0} r_{q, m_i} & \text{ if } m_i \text{ is odd for some odd } i, \\
\displaystyle \frac{1}{2} \left( \prod_{i: m_i > 0} r_{q, m_i} + \prod_{i \text{ odd}: \atop{ m_i > 0}} \frac{q-1}{q+1} r_{q, m_i} \prod_{i \text{ even}: \atop{ m_i > 0}} r_{q, m_i} \right) & \text{ if } m_i \text{ is even for all odd } i. \end{array} \right.$$
\end{lemma}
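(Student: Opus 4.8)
The plan is to deduce the revised formula directly from the Gill--Singh computation \eqref{slnucount}, the closed form \eqref{SelfRecCount} for $r_{q,d}$, and the fact, recorded in \eqref{RealClassesGen}, that $\prod_{i:\,m_i>0} r_{q,m_i}$ is the number of real classes of $\GL(n,\FF_q)$ of type $\nu$; no new idea is needed beyond rearranging these. First I would dispose of $q$ even: here the first branch of \eqref{slnucount} applies to every partition $\nu=(1^{m_1}2^{m_2}\cdots)$ of $n$, since the side condition ``$q$ is even'' always holds, so $sl_\nu=\prod_{i:\,m_i>0} r_{q,m_i}$, which is exactly the number of real classes of $\GL(n,\FF_q)$ of type $\nu$. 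Summing over $\nu\in\cP_n$ gives the first assertion.

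Now assume $q$ is odd. If $m_i$ is odd for some odd $i$, then the middle branch of \eqref{slnucount} gives $sl_\nu=\tfrac12\prod_{i:\,m_i>0} r_{q,m_i}$, which is the first line of the claim. Otherwise $m_i$ is even for every odd $i$; let $\rho$ be the number of odd $i$ with $m_i>0$. If $\rho=0$, then $\nu$ has only even parts, so the first branch of \eqref{slnucount} gives $sl_\nu=\prod_{i:\,m_i>0} r_{q,m_i}$, while the claimed second-line formula collapses (with empty products equal to $1$ and $h_\nu(q)=1$) to $\tfrac12(1+1)\prod_{i:\,m_i>0} r_{q,m_i}$, in agreement. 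If $\rho\geq1$ we are in the third branch of \eqref{slnucount}, so $sl_\nu=h_\nu(q)\prod_{i\text{ odd},\,m_i>0} q^{(m_i/2)-1}\prod_{i\text{ even},\,m_i>0} r_{q,m_i}$ with $h_\nu(q)=\tfrac12\big((q+1)^\rho+(q-1)^\rho\big)$.

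The key step is to rewrite the odd-part factors. Since $q$ is odd and each $m_i$ appearing here is a positive even integer, \eqref{SelfRecCount} gives $r_{q,m_i}=(q+1)q^{(m_i/2)-1}$, hence $\prod_{i\text{ odd},\,m_i>0} q^{(m_i/2)-1}=(q+1)^{-\rho}\prod_{i\text{ odd},\,m_i>0} r_{q,m_i}$. Substituting this and expanding $h_\nu(q)$, the powers of $q+1$ cancel, leaving
\[ sl_\nu=\tfrac12\left(1+\left(\frac{q-1}{q+1}\right)^{\rho}\right)\prod_{i:\,m_i>0} r_{q,m_i}. \]
Distributing $\big((q-1)/(q+1)\big)^{\rho}$ over the $\rho$ factors $r_{q,m_i}$ with $i$ odd and $m_i>0$ turns the second term into $\prod_{i\text{ odd},\,m_i>0}\frac{q-1}{q+1}r_{q,m_i}\prod_{i\text{ even},\,m_i>0} r_{q,m_i}$, which is exactly the second line of the claim (and it also recovers the $\rho=0$ subcase, so the two subcases could be merged).

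I do not expect a genuine obstacle here: the statement is a rearrangement of known formulas. The one point needing care is matching each partition to the correct branch of \eqref{slnucount} — in particular noticing that ``$\nu$ has no odd parts'' falls under the first branch of \eqref{slnucount} (not the ``otherwise'' branch) while still being covered by the second line of the new expression — after which the identity is forced by the closed form of $r_{q,d}$ for even $d$.
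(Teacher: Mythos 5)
Your proposal is correct and follows essentially the same route as the paper: both arguments read off the $q$ even and ``some odd $i$ with $m_i$ odd'' cases directly from \eqref{slnucount}, and in the remaining case use $r_{q,m_i}=(q+1)q^{(m_i/2)-1}$ for even $m_i$ to absorb the factors $(q+1)^{\rho}$ and $(q-1)^{\rho}$ of $h_\nu(q)$ into the products of $r_{q,m_i}$ and $\frac{q-1}{q+1}r_{q,m_i}$ over odd $i$. The only cosmetic difference is that the paper folds the $\rho=0$ subcase into the third branch as an empty product, whereas you verify it against the first branch separately; the computation is identical.
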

\begin{proof}  First, if $q$ is even, this statement follows from the first case of \eqref{slnucount}.  Note also this follows from the fact that the determinant of any real element of $\GL(n, \FF_q)$ must be $\pm 1$, and so must be in $\SL(n, \FF_q)$ in the case $q$ is even.

Now suppose $q$ is odd, and we are not in the second case of \eqref{slnucount}, so that $m_i$ is even whenever $i$ is odd.  First note that in the case that $m_i = 0$ for all odd $i$, we may interpret the expression in the first case of \eqref{slnucount} as the expression in the third case, with $\rho = 0$ and the product over all $i$ odd with $m_i > 0$ is empty.  That is, the first and third cases in \eqref{slnucount} may be combined.  Now consider the expression in the third case, so that $m_i$ is even whenever $i$ is odd, and note that $\rho$ is exactly the number of contributing factors in the product over odd $i$ with $m_i > 0$.  That is, 
\begin{align*}
h_{\nu}(q) \prod_{i \text{ odd}: \atop{m_i > 0}} q^{(m_i/2) - 1} & = \frac{1}{2} \left(\prod_{i \text{ odd}: \atop{m_i > 0}} (q+1) q^{(m_i/2) - 1} + \prod_{i \text{ odd}: \atop{m_i > 0}} (q-1) q^{(m_i/2) - 1} \right) \\
& = \frac{1}{2} \left( \prod_{i \text{ odd}: \atop{m_i > 0}}  r_{q, m_i} + \prod_{i \text{ odd}: \atop{m_i > 0}}\frac{q-1}{q+1} r_{q, m_i} \right),
\end{align*}
by applying \eqref{SelfRecCount} and the fact that $m_i$ is even when $i$ is odd in this case.  Substituting this expression in for the third case of \eqref{slnucount} and combining the first and third cases gives the claimed expression.
\end{proof}

\subsection{Real classes of $\PGL(n, \FF_q)$ and $\PGU(n, \FF_q)$} \label{RealClassesPGLPGU}

We begin by describing the conjugacy classes of $\PGL(n, \FF_q)$ and $\PGU(n, \FF_q)$, following \cite[Sections 2 and 6]{Ma81}.  Let $Z$ be the center of $\GL(n, \FF_q)$ or $\U(n, \FF_q)$, identified with $\FF_q^{\times}$ or $C_{q+1}$, respectively.  Let $G$ be either $\GL(n, \FF_q)$ or $\U(n, \FF_q)$, and let $\bar{G} = G/Z$ be either $\PGL(n, \FF_q)$ or $\PGU(n, \FF_q)$, respectively.  If $xZ$ and $yZ$ are elements of $\bar{G}$, then $xZ$ and $yZ$ are conjugate in $\bar{G}$ if and only if $x$ and $\eta y$ are conjugate in $G$ for some $\eta \in Z$.  If the conjugacy class of $y$ in $G$ corresponds to the sequence $(f_1(t), f_2(t), \ldots)$ as in Section \ref{RealClassesSLSU}, where $\deg(f_i(t)) = d_i$, then the conjugacy class of $\eta y$ corresponds to the sequence $(\eta^{d_1} f_1(t \eta^{-1}), \eta^{d_2} f_2(t\eta^{-1}), \ldots)$,
since $\alpha$ is a root of $f_i(t)$ if and only if $\eta \alpha$ is a root of the monic polynomial $\eta^{d_i} f_i(t\eta^{-1})$.  So, we define the action of $Z$ on polynomials $f(t)$ and on sequences $(f_i(t)) = (f_1(t), f_2(t), \dots)$ (with $d = \deg(f(t))$) by
\begin{equation} \label{Zaction}
\eta.f(t) = \eta^d f(t \eta^{-1}),  \text{ and }  \eta.(f_i(t)) = (\eta.f_i(t)) = (\eta.f_1(t), \eta.f_2(t), \ldots).
\end{equation}
Then the conjugacy classes of $\bar{G}$ are parameterized by the orbits of the $Z$-action on the sequences $(f_i(t))$.  Note that if the conjugacy class of $G$ corresponding to $(f_1(t), f_2(t), \ldots)$ is of type $\nu$, then so is the class corresponding to $\eta.(f_1(t), f_2(t), \ldots)$.  So we say a conjugacy class of $\bar{G}$ is type $\nu$ if it corresponds to a $Z$-orbit of classes in $G$ which are of type $\nu$.

The real classes of $\PGL(n, \FF_q)$ were described by Gill and Singh \cite[Section 2]{GiSi112}, and here we follow their work closely to describe the real classes of $\PGU(n, \FF_q)$.  For the rest of this section we fix some non-square $\zeta \in Z$.  An element $g \in G$ is \emph{$\zeta$-real} if $g$ is conjugate to $\zeta g^{-1}$ in $G$.  If $xZ \in \bar{G}$, we say $xZ$ \emph{lifts} to the element $g \in G$ if $g \in xZ$.  The following is \cite[Lemma 2.4]{GiSi112} in the case $G = \GL(n, \FF_q)$, and the proof is exactly the same in the case $G = \U(n, \FF_q)$.

\begin{lemma} \label{RealLift}
If $gZ \in \bar{G}$ is real in $\bar{G}$, then $gZ$ lifts to real or a $\zeta$-real element in $G$.
\end{lemma}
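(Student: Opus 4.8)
The plan is to take an element $gZ$ that is real in $\bar G$, unwind the definition via the description of conjugacy in $\bar G$, and deduce that some lift of $gZ$ is either real or $\zeta$-real in $G$. So suppose $gZ$ is real, meaning $gZ$ is conjugate to $(gZ)^{-1} = g^{-1}Z$ in $\bar G$. By the criterion for conjugacy in $\bar G = G/Z$ recalled just above (namely, $xZ$ and $yZ$ are conjugate in $\bar G$ if and only if $x$ and $\eta y$ are conjugate in $G$ for some $\eta \in Z$), this says there exists $\eta \in Z$ such that $g$ is conjugate in $G$ to $\eta g^{-1}$. The goal is to replace $g$ by a suitable lift $g_0 \in gZ$ — that is, $g_0 = \mu g$ for some $\mu \in Z$ — for which the corresponding $\eta$ becomes either $1$ (giving $g_0$ real) or $\zeta$ (giving $g_0$ $\zeta$-real).

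The key computation is to see how $\eta$ transforms when we pass from $g$ to $\mu g$. If $g$ is conjugate to $\eta g^{-1}$ in $G$, then conjugating both sides and using that $\mu \in Z$ is central, $\mu g$ is conjugate to $\mu \eta g^{-1} = (\mu^2 \eta)(\mu g)^{-1}$. Thus the "obstruction scalar'' for the lift $\mu g$ is $\mu^2 \eta$, so as $\mu$ ranges over $Z$ the obstruction ranges over the coset $\eta \cdot Z^2$, where $Z^2$ denotes the subgroup of squares in $Z$. Now $Z$ is cyclic — it is $\FF_q^\times$ in the linear case and $C_{q+1}$ in the unitary case — so $Z/Z^2$ is trivial when $q$ is even (every element of $Z$ is a square) and has order $2$ when $q$ is odd, with the nontrivial coset represented by our fixed non-square $\zeta$. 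Hence either $\eta \in Z^2$, so we may choose $\mu$ with $\mu^2\eta = 1$ and then $g_0 = \mu g$ is real in $G$; or $\eta \notin Z^2$, in which case $\eta \in \zeta Z^2$, so we may choose $\mu$ with $\mu^2 \eta = \zeta$ and then $g_0 = \mu g$ satisfies: $g_0$ conjugate to $\zeta g_0^{-1}$, i.e. $g_0$ is $\zeta$-real. In the even case only the first alternative occurs, consistent with the fact that every real class in $\bar G$ then lifts to a real element.

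The main point requiring care — though it is not deep — is the very first step: translating "$gZ$ real in $\bar G$'' into the existence of $\eta \in Z$ with $g \sim_G \eta g^{-1}$. This uses that $(gZ)^{-1}$ is represented by $g^{-1}$ together with the conjugacy criterion in $G/Z$; one should also note that $g^{-1}$ is one particular representative of the inverse coset, and the criterion already accounts for all representatives by the quantifier over $\eta \in Z$. After that, the argument is the elementary coset computation above, relying only on centrality of $Z$ and on $Z$ being cyclic so that $[Z : Z^2] \le 2$ with $\zeta$ generating $Z/Z^2$ when $q$ is odd. No structural facts about $\GL$ or $\U$ beyond the description of their centers are needed, which is exactly why, as the authors note, the proof is identical to Gill and Singh's argument for $\GL(n,\FF_q)$ and carries over verbatim to $\U(n,\FF_q)$.
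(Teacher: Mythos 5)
Your proof is correct and is essentially the standard argument of Gill and Singh's Lemma 2.4, which the paper cites rather than reproduces: translate reality of $gZ$ into $g \sim_G \eta g^{-1}$, observe that replacing $g$ by $\mu g$ replaces $\eta$ by $\mu^2\eta$, and use that $Z$ is cyclic so $\eta$ lies in $Z^2$ or in $\zeta Z^2$. The coset computation and the treatment of the $q$ even case are both right, and, as you note, nothing beyond centrality and cyclicity of $Z$ is used, which is exactly why the argument transfers verbatim to $\U(n,\FF_q)$.
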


A conjugacy class of $G$ corresponding to $(f_1(t), f_2(t), \ldots)$ consists of $\zeta$-real elements if and only if each $f_i(t)$ is a $\zeta$-self-reciprocal polynomial.  Thus, by Lemma \ref{RealLift} in order to understand the real conjugacy classes of $\bar{G}$, we must understand the $Z$-orbits of sequences $(f_1(t), f_2(t), \ldots)$, where every $f_i(t)$ is self-reciprocal or every $f_i(t)$ is $\zeta$-self-reciprocal.  This essentially requires the understanding of $Z$-orbits of individual self-reciprocal or $\zeta$-self-reciprocal polynomials.  

For these purposes, we define the following notation (following \cite{GiSi112}).  Let $T_d$ denote the set of self-reciprocal polynomials of degree $d$ which are in $\cU_q[t]$, and recall from Section \ref{Polynomials} that this is the same set whether we are in $M_q[t]$ or in $\cU_q[t]$.  We let $S_d$ denote the set of $\zeta$-self-reciprocal polynomials of degree $d$ which are in $M_q[t]$ (in the $G = \GL(n, \FF_q)$ case) or in $\cU_q[t]$ (in the $G = \U(n, \FF_q)$ case).  While these are in general distinct sets of polynomials in these two cases, they do have the same cardinality by Lemma \ref{ZetaUSelfCount}.  Given any $f(t)$ in $M_q[t]$ or $\cU_q[t]$, we let $[f]$ denote the $Z$-orbit of $f(t)$, and we let $[f]_T = [f] \cap T_d$ and $[f]_S = [f] \cap S_d$ when $\deg(f(t)) = d$.  In the rest of this section, we follow the same arguments for $G = \U(n, \FF_q)$ as are given for $G = \GL(n, \FF_q)$ in \cite[Section 2]{GiSi112}.  Since many of the details are essentially the same, we will give outlines of proofs with mostly details which are complementary to those given in \cite[Section 2]{GiSi112}.

The following is the $G = \U(n, \FF_q)$ version of \cite[Lemma 2.2]{GiSi112}.

\begin{lemma} \label{OrbitSize} Let $f(t) \in \cU_q[t]$, with $f(t) = t^d + a_{d-1}t^{d-1} + \cdots + a_1 t + a_0$. 
\begin{enumerate}
\item[(i)] If $q$ is even, then $[f]_T$ contains at most one element (and $[f]_S$ is empty).
\item[(ii)] If $q$ is odd, then $[f]_S$ and $[f]_T$ contain at most $2$ elements.  In particular, $[f]_T$ and $[f]_S$ (when nonempty) may be assumed to be of the form $\{ f(t), \eta.f(t) \}$ where $\eta \in C_{q+1}$ has order a power of $2$.
\end{enumerate}
\end{lemma}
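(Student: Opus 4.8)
The plan is to study the $Z$-orbit $[f]$ through its stabilizer $H = \{\eta \in Z : \eta.f = f\}$, a subgroup of the cyclic group $Z = C_{q+1}$, and to show that the portion of the orbit landing inside $T_d$ or $S_d$ is governed by the $2$-torsion of $Z/H$. First I would reduce to a convenient representative: if $[f]_T$ (resp. $[f]_S$) is empty there is nothing to prove, and otherwise I may replace $f$ by a representative of $[f]$ lying in $T_d$ (resp. $S_d$), so that $f$ itself is self-reciprocal (resp. $\zeta$-self-reciprocal); then $[f]_T = \{\eta.f : \eta \in Z,\ \eta.f \in T_d\}$, and similarly for $S_d$.

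The key step is the claim that, with $f \in T_d$, one has $\eta.f \in T_d$ if and only if $\eta^2 \in H$, and likewise that, with $f \in S_d$, one has $\eta.f \in S_d$ if and only if $\eta^2 \in H$. I would argue this on multisets of roots in $\bar{\FF}_q^{\times}$: the roots of $\eta.f$ are exactly $\eta$ times the roots of $f$ (with multiplicity), so $\eta.f = f$ iff multiplication by $\eta$ preserves the root multiset of $f$. Using that $f$ is self-reciprocal, the requirement that $\eta.f$ also be self-reciprocal becomes, after cancelling the inversion bijection supplied by self-reciprocity of $f$, the statement that multiplication by $\eta^2$ preserves the root multiset of $f$, i.e. $\eta^2 \in H$; the $\zeta$-self-reciprocal case is the same computation with $\alpha \mapsto \zeta\alpha^{-1}$ in place of $\alpha \mapsto \alpha^{-1}$. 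A small point to record here is that $\eta.f$ automatically stays in $\cU_q[t]$: since $\eta \in C_{q+1}$ satisfies $\eta^{q+1} = 1$, multiplication by $\eta$ commutes with $\alpha \mapsto \alpha^{-q}$ on root multisets, so self-conjugacy of $f$ passes to $\eta.f$, and no constraint beyond $\eta^2 \in H$ intervenes.

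Granting the key step, the map $\eta H \mapsto \eta.f$ puts $[f]_T$ (resp. $[f]_S$) in bijection with the set of cosets $\eta H$ in $Z/H$ satisfying $(\eta H)^2 = H$, i.e. with the subgroup of elements of order dividing $2$ in $Z/H$. As $Z = C_{q+1}$ is cyclic, $Z/H$ is cyclic, so this subgroup has order at most $2$, which gives the bound in (ii) and also — when $q$ is even, so that $|Z| = q+1$ is odd and $Z/H$ has no element of order $2$ — the bound of $1$ in (i). That $[f]_S$ is empty for $q$ even is then immediate, since $q+1$ being odd makes every element of $Z$ a square, so no non-square $\zeta$ exists and $S_d = \emptyset$. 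For the final assertion of (ii), if $[f]_T = \{f,\mu.f\}$ with $\mu \notin H$ and $\mu^2 \in H$, write $\mu = \mu_2\mu_{2'}$ with $\mu_2$ of $2$-power order and $\mu_{2'}$ of odd order (the primary decomposition in the cyclic group $Z$); then $\mu_2^2,\mu_{2'}^2 \in \langle \mu^2\rangle \subseteq H$, and since $\mu_{2'}$ has odd order it lies in $\langle \mu_{2'}^2\rangle \subseteq H$, whence $\mu.f = \mu_2.f$ and we may take $\eta = \mu_2$, of $2$-power order; the case $|[f]_T| = 1$ is covered by $\eta = 1$, and $[f]_S$ is handled identically.

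I expect the only genuinely delicate point to be the key step — the root bookkeeping translating the (resp. $\zeta$-)self-reciprocity of $\eta.f$ into the relation $\eta^2 \in H$, together with the observation that the $C_{q+1}$-action preserves $\cU_q[t]$. Everything downstream is elementary arithmetic in cyclic groups.
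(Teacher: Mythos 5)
Your proof is correct, and it reaches the same crux as the paper's argument --- that for $f$ self-reciprocal (resp.\ $\zeta$-self-reciprocal), $\eta.f$ remains so exactly when $\eta^2$ fixes $f$ --- but it packages that fact quite differently. The paper derives it by comparing coefficients: both $f$ and $\eta.f$ lying in $T_d$ (or $S_d$) forces $\eta^{2k}a_k = a_k$ for all $k$, i.e.\ $|\eta|$ divides $2k$ whenever $a_k \neq 0$, which is your condition $\eta^2 \in H$ in disguise; you get the same thing by cancelling the inversion bijection on root multisets, which is cleaner and also makes the preservation of $\cU_q[t]$ (via $\eta^{-q}=\eta$) transparent. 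The genuine divergence is in how the bound of $2$ is then extracted: the paper takes two elements $\eta, \beta$ of even order carrying $f$ into $T_d$ (or $S_d$), shows $|\eta|_2 = |\beta|_2$ and hence $\eta.f = \beta.f$, and separately strips off the odd part to get a $2$-power representative; you instead identify $[f]_T$ (resp.\ $[f]_S$) with the $2$-torsion subgroup of the cyclic group $Z/H$ via orbit--stabilizer, which delivers the bound of $2$, the bound of $1$ for $q$ even (where $|Z|$ is odd), and the existence of a representative $\eta$ of $2$-power order in one stroke. Your version is the more structural and arguably shorter one; the paper's explicit coefficient manipulations have the incidental advantage of matching the formulas it manipulates again in Lemmas \ref{ZetaUSelfCount} and \ref{STOrbit}. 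All the small points you flag (monic polynomials determined by root multisets, the primary decomposition $\mu = \mu_2\mu_{2'}$ with $\mu_{2'} \in \langle \mu_{2'}^2\rangle \subseteq H$, emptiness of $S_d$ for $q$ even since every element of a cyclic group of odd order is a square) check out.
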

\begin{proof}  Let $\eta \in C_{q+1}$.  If $f(t)$ and $\eta.f(t)$ are both in $T_d$ or both in $S_d$, one obtains $\eta^i a_i = \eta^{-i} a_i$ for $0 < i < d$.  That is, $|\eta|$ must divide $2k$ whenever $a_k \neq 0$.

Suppose $|\eta|$ is odd.  If both $f(t)$ and $\eta.f(t)$ are in $[f]_S$ or both in $[f]_T$, then $|\eta|$ divides $k$ whenever $a_k \neq 0$, and so $f(t) \in \cU_q[t^{|\eta|}]$.  In particular, $\eta.f(t) = f(t)$.  If $q$ is even, then $|\eta|$ divides $q+1$ and so must be odd, and the result follows in this case.

If $q$ is odd, suppose that for some $\beta, \eta \in C_{q+1}$ of even order we have $f(t)$, $\eta.f(t)$, and $\beta.f(t)$ are all in $T_d$ or all in $S_d$, and that $\beta.f(t)$ and $\eta.f(t)$ are distinct from $f(t)$.  Thus $f(t) \not\in \cU_q[t^{|\eta|}] \cup \cU_q[t^{|\beta|}]$, while $|\beta|/2$ and $|\eta|/2$ both divide $k$ whenever $a_k \neq 0$, so $f(t)$ lies in $\cU_q[t^{|\eta|/2}] \cap \cU_q[t^{|\beta|/2}] = \cU_q[t^{\mathrm{lcm}(|\eta|/2, |\beta|/2)}]$.  It follows that we must have $|\eta|_2 = |\beta|_2$, and then that $\eta.f(t) = \beta.f(t)$.  Note also that if $|\eta| = 2^k s$ with $s$ odd, and $\gamma = \eta^{2^k}$, then $\gamma$ has odd order so $\gamma.f(t) = f(t)$.  Then $|\eta \gamma^{-1}| = 2^k$ and $\eta\gamma^{-1}.f(t) = \eta.f(t)$, and the result follows.
\end{proof}

The following is very similar to statements contained in \cite[Proof of Lemma 2.3]{GiSi112}.  

\begin{lemma} \label{STOrbit}
Let $q$ be odd, $\eta \in C_{q+1}$, and $f(t) = t^d + a_{d-1}t^{d-1} + \cdots + a_1 t + a_0 \in \cU_q[t]$ with $d$ even.
\begin{enumerate}
\item[(i)] If $f(t) \in S_d$, then $\eta.f(t) \in T_d$ if and only if $f(t) \in \cU_q[t^{|q+1|_2}]$ and $\eta^2 = \zeta\beta^{-1}$ for some $\beta \in C_{q+1}$ with $|\beta|_2 = |q+1|_2$.
\item[(ii)] If $f(t) \in T_d$, then $\eta.f(t) \in S_d$ if and only if $f(t) \in \cU_q[t^{|q+1|_2}]$ and $\eta^2 = \zeta \beta$ for some $\beta \in C_{q+1}$ with $|\beta|_2 = |q+1|_2$.
\end{enumerate}
\end{lemma}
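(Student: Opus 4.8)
The plan is to argue entirely with coefficients, following \cite[Proof of Lemma 2.3]{GiSi112} and reusing the computation in the proof of Lemma \ref{ZetaUSelfCount}. Write $f(t) = \sum_{i=0}^{d} a_i t^i$ with $a_d = 1$. By \eqref{Zaction}, the coefficient of $t^i$ in $\eta.f(t)$ is $b_i := a_i \eta^{d-i}$, so $\eta.f(t)$ is again monic of degree $d$; moreover the $C_{q+1}$-action preserves $\cU_q[t]$ --- this is already used implicitly when the $Z$-action is introduced in Section \ref{RealClassesPGLPGU} --- since the defining identity $a_{d-i} = a_i^q a_0$ of $\cU_q[t]$ transfers to the $b_i$ using $\eta^{q+1} = 1$ and $a_0 \in C_{q+1}$. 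Hence, for $f \in \cU_q[t]$, we have $\eta.f \in T_d$ if and only if $\eta.f$ is self-reciprocal, and $\eta.f \in S_d$ if and only if $\eta.f$ is $\zeta$-self-reciprocal.

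Next I would unwind the membership conditions at the level of coefficients. Recall that $f \in T_d$ means $a_i a_0 = a_{d-i}$ for all $i$ (so $a_0 = \pm 1$), while $f \in S_d$ means $a_i a_0 = a_{d-i}\zeta^{d-i}$ for all $i$ (so $a_0^2 = \zeta^d$). For part (i), imposing the self-reciprocity relations $b_i b_0 = b_{d-i}$ on $\eta.f$ and substituting $b_i = a_i \eta^{d-i}$ together with $a_{d-i} = a_i a_0 \zeta^{-(d-i)}$, one cancels the nonzero factor $a_i a_0$ whenever $a_i \neq 0$ and is left with the single requirement that
\[
(\eta^2 \zeta)^{d-i} = 1 \quad \text{for all } i \text{ such that } a_i \neq 0.
\]
Since $f$ is $\zeta$-self-reciprocal, the set $\{ i : a_i \neq 0 \}$ is stable under $i \mapsto d - i$ and contains $d$, so this is equivalent to $|\eta^2 \zeta|$ dividing $\gcd\{ i : a_i \neq 0 \}$. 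The parallel computation for part (ii) --- starting from $a_{d-i} = a_i a_0$ and imposing the $\zeta$-self-reciprocity relations $b_i b_0 = b_{d-i}\zeta^{d-i}$ on $\eta.f$ --- yields the mirror requirement $(\eta^2 \zeta^{-1})^{d-i} = 1$ for all $i$ in the support of $f$.

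The substantive step, and where I expect the real care to be needed, is converting these order conditions into the stated form. Since $\eta^2$ is a square in $C_{q+1}$ and $\zeta$ (hence $\zeta^{-1}$) is a non-square, both $\eta^2 \zeta$ and $\eta^2 \zeta^{-1}$ are non-squares in $C_{q+1}$, so each has order whose $2$-part equals $|q+1|_2$. Substituting this into the divisibility condition above forces $|q+1|_2 \mid (d-i)$, equivalently $|q+1|_2 \mid i$, for all $i$ in the support of $f$; that is, $f(t) \in \cU_q[t^{|q+1|_2}]$. This gives the forward implication and identifies the polynomial condition in both parts. For the reverse implication I would use Lemma \ref{OrbitSize}(ii) to normalize $\eta$ (and the fixed $\zeta$) so that only the $2$-part matters; then $\eta^2\zeta$ (resp. $\eta^2 \zeta^{-1}$) lies in the Sylow $2$-subgroup of $C_{q+1}$ and, being a non-square, generates it, hence has order exactly $|q+1|_2$, so that $(\eta^2\zeta)^{d-i} = 1$ (resp. $(\eta^2\zeta^{-1})^{d-i} = 1$) for every $i$ divisible by $|q+1|_2$ --- which is precisely the support of $f \in \cU_q[t^{|q+1|_2}]$. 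Finally, the condition on $\eta$ takes the stated shape upon putting $\beta := \zeta \eta^{-2}$ in part (i), so that $\eta^2 = \zeta\beta^{-1}$ and $\beta$, being a non-square times a square, is a non-square, giving $|\beta|_2 = |q+1|_2$; symmetrically, $\beta := \zeta^{-1}\eta^2$ in part (ii) gives $\eta^2 = \zeta\beta$ with $\beta$ a non-square.

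The main obstacle throughout is the bookkeeping inside the cyclic group $C_{q+1}$: keeping the $2$-part and the odd part of the various orders cleanly separated, and justifying via Lemma \ref{OrbitSize}(ii) that the odd part of $\eta$ contributes nothing, so that the order condition honestly collapses to the divisibility by $|q+1|_2$ recorded in the polynomial statement.
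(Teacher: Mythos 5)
Your forward implication is essentially the paper's own proof: from the coefficient relations you extract $(\eta^2\zeta)^{d-i}=1$ for all $i$ in the support of $f$, note that $\eta^2\zeta$ is a non-square and hence has order with $2$-part equal to $|q+1|_2$, and conclude that $|q+1|_2$ divides every exponent occurring in $f$, i.e.\ $f(t)\in\cU_q[t^{|q+1|_2}]$. The paper gives only this direction (as an outline of (i)), and up to that point you match it.

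The reverse implication is where your argument has a genuine gap. The condition ``$\eta^2=\zeta\beta^{-1}$ for some $\beta\in C_{q+1}$ with $|\beta|_2=|q+1|_2$'' is satisfied by \emph{every} $\eta\in C_{q+1}$: taking $\beta=\zeta\eta^{-2}$, this $\beta$ is a non-square (a non-square times a square), so $|\beta|_2=|q+1|_2$ holds automatically --- you observe exactly this when you construct $\beta$ at the end, but not that it renders the hypothesis of the converse vacuous. Consequently that hypothesis gives no control over $\eta$, and your key step ``$\eta^2\zeta$ lies in the Sylow $2$-subgroup of $C_{q+1}$'' does not follow from it: if $\zeta$ has nontrivial odd part to its order, then for $\eta=1$ one has $(\eta^2\zeta)^{|q+1|_2}=\zeta^{|q+1|_2}\neq 1$, and there are $f\in S_d\cap\cU_q[t^{|q+1|_2}]$ with $d-|q+1|_2$ in their support (for instance $q=11$, $\zeta$ of order $12$, $f(t)=t^8+a_4t^4+\zeta^4$ for suitable $a_4\neq 0$) for which $1.f=f\notin T_d$. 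The appeal to Lemma \ref{OrbitSize}(ii) cannot repair this: that lemma normalizes the $\eta$ realizing a two-element orbit $[f]_T$ or $[f]_S$, not an arbitrary $\eta$ supplied by the hypothesis you are trying to exploit. What your computation actually establishes is that for $f\in S_d$, $\eta.f\in T_d$ if and only if $(\eta^2\zeta)^i=1$ for every $i$ in the support of $f$; the $f$-independent condition that makes the converse valid for all $f\in\cU_q[t^{|q+1|_2}]$ is $|\eta^2\zeta|=|q+1|_2$ --- the order itself, not merely its $2$-part --- equivalently $\eta^2=\beta\zeta^{-1}$ with $\beta$ a generator of the Sylow $2$-subgroup, and such $\eta$ exist because the odd part of $\zeta$ is a square in $C_{q+1}$. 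That stronger reading is what the later applications in Lemmas \ref{OrbitSize2} and \ref{PGUReal} actually require; since the paper's own proof never addresses the converse, you should prove this corrected equivalence explicitly rather than the literal statement.
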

\begin{proof}  The proofs of (i) and (ii) are almost identical, so we give an outline of (i).  If $f(t) \in S_d$, then from the proof of Lemma \ref{ZetaUSelfCount}, we have $a_{d-i} = a_i a_0 \zeta^{i-d}$ for $0 < i < d$.  If we also have $\eta.f(t)$ is self-reciprocal, then we can compute that $(\eta^{2})^{d-i} a_i= \zeta^{i-d} a_i$.  Since $a_i \neq 0$ if and only if $a_{d-i} \neq 0$, then we have $\eta^{2i} = \zeta^{-i}$ whenever $a_i \neq 0$ (and note $\eta^{2d} = \zeta^{-d}$ since $a_0^2 = \zeta^d$).  Thus $\eta^2 = \beta \zeta^{-1}$ for some $\beta \in C_{q+1}$, where $|\beta|$ divides the greatest common divisor of all $i \neq 0, d$ such that $a_{i} \neq 0$.  Since $\zeta$ is a non-square, then $\beta$ is also a non-square, and so $|q+1|_2 = |\beta|_2$.  Thus $|q+1|_2$ divides all $i$ such that $a_i \neq 0$, and so $f(t) \in \cU_q[t^{|q+1|_2}]$.
\end{proof}

The following result is the unitary analog of \cite[Lemma 2.3]{GiSi112}.

\begin{lemma} \label{OrbitSize2}  Let $q$ be odd, and let $f(t) \in \cU_q[t]$ with $f(t) = t^d + a_{d-1} t^{d-1} + \cdots + a_1 t + a_0$.
\begin{enumerate}
\item[(i)] If $d$ is odd, then $S_d$ is empty, and if $f(t) \in T_d$ then $|[f]_{T}| = 2$.
\item[(ii)] Suppose $d$ is even and $f(t) \in \cU_q[t^{|q+1|_2}]$.  If $f(t) \in T_d$ or $f(t) \in S_d$, then $|[f]_S| = |[f]_T| = 1$.
\item[(iii)] Suppose $d$ is even and $f(t) \not\in \cU_q[t^{|q+1|_2}]$.  If $f(t) \in S_d$, then $|[f]_S| = 2$ and $|[f]_T| = 0$.  If $f(t) \in T_d$, then $|[f]_T| = 2$ and $|[f]_S| = 0$.

\end{enumerate}
\end{lemma}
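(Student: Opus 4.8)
The plan is to recast the statement as an orbit--stabilizer count inside the cyclic group $C_{q+1}$, organized around a single invariant: for $f(t) = t^d + a_{d-1}t^{d-1} + \cdots + a_0 \in \cU_q[t]$ let $\ell = \ell(f)$ be the greatest common divisor of $\{\, j : 0 \le j \le d,\ a_j \ne 0 \,\}$, equivalently the largest integer with $f(t) \in \cU_q[t^{\ell}]$. Since $\eta.f(t)$ has coefficients $a_j\eta^{d-j}$, it has the same support as $f(t)$, so $\ell$ is constant on $[f]$; note also $\ell \mid d$, and that the $C_{q+1}$-action maps $\cU_q[t]$ to itself. A short coefficient computation — of the kind already used for Lemmas \ref{ZetaUSelfCount}, \ref{OrbitSize} and \ref{STOrbit}, comparing $\eta.f(t)$ to its $*$- or $\zeta$-reciprocal and using $a_0^2 = 1$ when $f(t) \in T_d$ and $a_0^2 = \zeta^d$ when $f(t) \in S_d$ — yields, for $\eta \in C_{q+1}$: (a) $\eta.f(t) = f(t)$ iff $|\eta| \mid \ell$, so $|[f]| = (q+1)/(q+1,\ell)$; (b) if $f(t) \in T_d$, then $\eta.f(t) \in T_d$ iff $|\eta| \mid 2\ell$, and $\eta.f(t) \in S_d$ iff $\eta^{2\ell} = \zeta^{\ell}$; (c) if $f(t) \in S_d$, then $\eta.f(t) \in S_d$ iff $|\eta| \mid 2\ell$, and $\eta.f(t) \in T_d$ iff $\eta^{2\ell} = \zeta^{-\ell}$. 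Each condition on $\eta$ above cuts out a union of cosets of $\mathrm{Stab}(f) = \{\eta : |\eta|\mid\ell\}$, so $|[f]_T|$ and $|[f]_S|$ are the number of qualifying $\eta$ divided by $(q+1,\ell)$. The counting is then routine: $|\{\eta : |\eta| \mid k\}| = (q+1,k)$, while $|\{\eta : \eta^{2\ell} = \xi\}|$ is $(q+1,2\ell)$ if $\xi$ is a $(2\ell)$-th power in $C_{q+1}$ and $0$ otherwise — and since $\zeta$ is a non-square (so $|\zeta|_2 = |q+1|_2$), the element $\zeta^{\pm\ell}$ is a $(2\ell)$-th power exactly when $(q+1,2\ell) \mid \ell$.

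For part (i): $S_d$ is empty when $d$ is odd by Lemma \ref{ZetaUSelfCount}. If $f(t) \in T_d$ with $d$ odd then $\ell$ is odd, so $(q+1,2\ell) = 2(q+1,\ell)$ (as $q$ is odd), and (a), (b) give $|[f]_T| = 2$; concretely $(-1).f(t) = -f(-t)$ lies in $T_d$ and differs from $f(t)$, since equality would force $a_0 = -a_0$, impossible in odd characteristic. (The upper bound $|[f]_T| \le 2$ is also Lemma \ref{OrbitSize}(ii).)

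For part (ii): let $d$ be even and $m := |q+1|_2$ divide $\ell$. Then $(q+1,2\ell) = (q+1,\ell)$ and this common value divides $\ell$, so $\zeta^{\pm\ell}$ is a $(2\ell)$-th power in $C_{q+1}$; feeding this into the counts gives $|[f]_T| = |[f]_S| = 1$ whether $f(t)$ starts in $T_d$ or in $S_d$. (Alternatively, the bounds $|[f]_T| \le 1$, $|[f]_S| \le 1$ follow from Lemma \ref{OrbitSize}(ii): a $2$-power-order $\eta$ keeping a member of $[f]$ inside $T_d$ or $S_d$ while moving it would satisfy $|\eta| \mid 2\ell$ but $|\eta| \nmid \ell$, which is impossible once $m \mid \ell$; and the matching lower bounds $[f]_T, [f]_S \ne \varnothing$ come from Lemma \ref{STOrbit} upon taking $\beta$ of order $m$.)

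For part (iii): let $d$ be even and $m \nmid \ell$, that is, $|\ell|_2 < m$. If $f(t) \in S_d$, Lemma \ref{STOrbit}(i) rules out any $\eta.f(t) \in T_d$ (that would force $f(t) \in \cU_q[t^{m}]$), so $|[f]_T| = 0$; and $|\ell|_2 < m$ gives $(q+1,2\ell) = 2(q+1,\ell)$, so $|[f]_S| = 2$ — with $|[f]_S| \ge 2$ witnessed by any $\eta \in C_{q+1}$ of order $2|\ell|_2$ (which exists since $2|\ell|_2 \mid m \mid q+1$), for which $|\eta| \mid 2\ell$ yet $|\eta| \nmid \ell$. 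The case $f(t) \in T_d$ is symmetric, using Lemma \ref{STOrbit}(ii) for $|[f]_S| = 0$. The only genuine bookkeeping here is the $2$-adic comparison of $\ell(f)$ with $q+1$, which is exactly what separates (ii) from (iii); the main obstacle is not conceptual but organizational — getting the three coefficient identities (b), (c) right, in particular tracking the powers of $\zeta$ and the two possible values of $a_0^2$, and then running the elementary $2$-adic count cleanly across the cases.
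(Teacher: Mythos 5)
Your argument is correct, and it reorganizes the proof around a single exact count rather than the paper's combination of upper bounds and explicit witnesses. The paper proves this lemma by quoting Lemma \ref{OrbitSize}(ii) for the bound $|[f]_T|, |[f]_S| \le 2$ and Lemma \ref{STOrbit} for the presence or absence of transitions between $T_d$ and $S_d$, and then exhibits concrete elements ($(-1).f(t)$ in case (i), an $\eta$ of order equal to the smallest $2$-power $b$ with $f(t) \notin \cU_q[t^b]$ in case (iii)) to show the orbits are as large as claimed. You instead introduce the invariant $\ell(f) = \gcd$ of the support, observe that $\mathrm{Stab}(f)$ is the subgroup of order $(q+1,\ell)$, and reduce everything to the formulas $|[f]_T| = |\{\eta : \eta.f \in T_d\}|/(q+1,\ell)$ and likewise for $S_d$, with the qualifying sets cut out by $|\eta| \mid 2\ell$ or $\eta^{2\ell} = \zeta^{\pm\ell}$. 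I checked the three coefficient identities (a)--(c) and the $2$-adic criterion: $(q+1,2\ell) \mid \ell$ is indeed equivalent to $|q+1|_2 \mid \ell$, which is equivalent both to $(q+1,2\ell)=(q+1,\ell)$ and to $\zeta^{\pm\ell}$ being a $(2\ell)$-th power (the non-square hypothesis forces $|\zeta|_2 = |q+1|_2$, and the odd parts cause no obstruction since $|\zeta|$ divides $q+1$); so the dichotomy between (ii) and (iii) falls out of one comparison of $|\ell|_2$ with $|q+1|_2$. What your route buys is uniformity --- all three parts are instances of one ratio $(q+1,2\ell)/(q+1,\ell) \in \{1,2\}$ --- at the cost of re-deriving the coefficient computations that the paper has already packaged into Lemmas \ref{OrbitSize} and \ref{STOrbit}; the paper's route is shorter given those lemmas but requires a separate witness in each case. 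One small presentational point: when you verify identities such as $\eta^{2(d-j)}=1$ for all $j$ in the support, it is worth saying explicitly that the endpoints $j=0$ and $j=d$ are covered by $a_0^2=1$ (resp.\ $a_0^2=\zeta^d$) together with $\ell \mid d$, which you do gesture at but should spell out.
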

\begin{proof} If $d$ is odd, we have already mentioned in Lemma \ref{ZetaUSelfCount} that $f(t)$ cannot be $\zeta$-self-reciprocal, and so $S_d$ is empty.  If $f(t) \in T_d$, then we know $[f]_T$ contains at most two elements by Lemma \ref{OrbitSize}(ii).  But $(-1).f(t) = -f(-t) \neq f(t)$ since $d$ is odd, and so $[f]_T = \{ f(t), (-1).f(t) \}$ has two elements in this case, and (i) follows.

Now suppose $d$ is even and that $f(t) \in S_d$ or $f(t) \in T_d$.  By Lemma \ref{OrbitSize}, if $f(t) \in S_d$ (or $f(t) \in T_d$, respectively), then we may assume $[f]_{S}$ (or $[f]_T$) is of the form $\{ f(t), \eta.f(t) \}$ for some $\eta \in C_{q+1}$ with order a power of $2$.  Suppose $f(t) \in \cU_q[t^{|q+1|_2}]$.  It follows that $\eta.f(t) = f(t)$, so $[f]_S = \{ f(t) \}$ when $f(t) \in S_d$ (and $[f]_T = \{ f(t) \}$ when $f(t) \in T_d$).  It follows directly from Lemma \ref{STOrbit} that if $f(t) \in S_d$, then there is an $\eta \in C_{q+1}$ such that $\eta.f(t) \in T_d$ (and if $f(t) \in T_d$ then $\eta.f(t) \in S_d$ for some $\eta \in C_{q+1}$).  Thus $|[f]_S| = |[f]_T| = 1$ in all cases, and (ii) follows.

Finally, suppose $f(t) \not\in \cU_q[t^{[q+1]_2}]$, and let $b$ be the smallest power of $2$ such that $f(t) \not\in \cU_q[t^b]$.  Taking $\eta \in C_{q+1}$ such that $|\eta|=b$, we have $\eta.f(t) \neq f(t)$, and if $f(t) \in S_d$ (or $f(t) \in T_d$), then also $\eta.f(t) \in S_d$ (or $\eta.f(t) \in T_d$).  So by Lemma \ref{OrbitSize}(ii), we have $|[f]_S| = 2$ (or $|[f]_T = 2$).  It also follows from Lemma \ref{STOrbit} that if $f(t) \in S_d$, then $|[f]_T| =0$ and if $f(t) \in T_d$, then $|[f]_S| = 0$.  Thus we have (iii).
\end{proof}

We are now able to classify the real classes of $\PGU(n, \FF_q)$ in the following, which is analogous with \cite[Lemma 2.6]{GiSi112}.

\begin{lemma} \label{PGUReal}
Let $q$ be odd.  Consider a conjugacy class of type $\nu = (1^{m_1} 2^{m_2} \cdots)$ in $\U(n, \FF_q)$ parameterized by the sequence $(f_1(t), f_2(t), \ldots)$, where $\deg(f_i(t)) = m_i$.  Let $[(f_i(t))]$ denote the $Z$-orbit of this sequence.
\begin{enumerate}
\item[(i)] If some $m_i$ is odd, then $[(f_i(t))]$ contains no $\zeta$-real classes, and contains either zero or two real classes.
\item[(ii)] If all $m_i$ are even and $f_i(t) \in \cU_q[t^{|q+1|_2}]$ for all $i$, then either $[(f_i(t))]$ contains no real or $\zeta$-real classes, or contains exactly one real class and exactly one $\zeta$-real class.
\item[(iii)] If all $m_i$ are even and $f_j(t) \not\in \cU_q[t^{|q+1|_2}]$ for some $j$, then $[(f_i(t))]$ contains either no real or $\zeta$-real classes, or exactly two real classes and no $\zeta$-real classes, or exactly two $\zeta$-real classes and no real classes.
\end{enumerate}
\end{lemma}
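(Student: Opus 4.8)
The plan is to reduce the statement to the single-polynomial Lemmas~\ref{OrbitSize}, \ref{OrbitSize2} and \ref{STOrbit}, together with one elementary fact about the cyclic group $Z = C_{q+1}$. Recall that a conjugacy class of $G = \U(n,\FF_q)$ inside the orbit $[(f_i(t))]$ is real precisely when it has a representative sequence all of whose entries lie in the corresponding $T_{m_i}$, and is $\zeta$-real precisely when it has a representative all of whose entries lie in the corresponding $S_{m_i}$. Setting $H = \mathrm{Stab}_Z((f_i(t))) = \bigcap_i \mathrm{Stab}_Z(f_i(t))$, the map $\eta H \mapsto \eta.(f_i(t))$ identifies the orbit with $Z/H$, so the number of real (resp.\ $\zeta$-real) classes it contains is the number of cosets $\eta H$ with $\eta.f_i(t)\in T_{m_i}$ (resp.\ $S_{m_i}$) for all $i$, a condition depending only on $\eta H$.

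Two facts drive the argument. First, if $g(t)$ and $\eta.g(t)$ both lie in $T_d$, or both lie in $S_d$, then $\eta^2.g(t)=g(t)$: a coefficient comparison (the computation behind Lemma~\ref{OrbitSize}) gives $\eta^{2(d-k)}=1$ whenever the coefficient of $t^k$ in $g(t)$ is nonzero, which is exactly the condition that $\eta^2$ fix $g(t)$; the same computation shows $(-1).g(t)$ is self-reciprocal whenever $g(t)$ is, and that $\eta.g(t)\in T_d$ whenever $g(t)\in T_d\cap\cU_q[t^c]$ and $|\eta|$ divides $2c$. Consequently, if the representative $(f_i(t))$ is chosen in $\prod_i T_{m_i}$ (resp.\ $\prod_i S_{m_i}$), the set of $\eta\in Z$ carrying it back into that product is a union of $H$-cosets whose images in $Z/H$ all have order dividing $2$; since the cyclic group $Z/H$ has at most two such elements, the orbit meets $\prod_i T_{m_i}$ (resp.\ $\prod_i S_{m_i}$) in at most two sequences. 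Second, $\cU_q[t^b]$ is $Z$-stable for every $b$, so the hypotheses separating (ii) from (iii) are properties of the whole orbit, not of the chosen representative.

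Now the three cases. In (i), some $m_j$ is odd, so $S_{m_j}=\varnothing$ by Lemma~\ref{OrbitSize2}(i) and the orbit contains no $\zeta$-real class; if it meets $\prod_i T_{m_i}$ at $(f_i(t))$, then $(-1).(f_i(t))$ lies there too and differs from $(f_i(t))$ in the $j$-th coordinate, where it negates the nonzero constant term of the odd-degree self-reciprocal $f_j(t)$, so by the bound above there are exactly zero or two real classes. In (ii), all $m_i$ are even and every coordinate lies in $\cU_q[t^{|q+1|_2}]$; if the orbit meets $\prod_i T_{m_i}$ or $\prod_i S_{m_i}$ we take the representative there, and then Lemma~\ref{OrbitSize2}(ii) gives $|[f_i]_T|=|[f_i]_S|=1$ for all $i$, so the orbit meets each of $\prod_i T_{m_i}$, $\prod_i S_{m_i}$ in at most one sequence; moreover Lemma~\ref{STOrbit} shows that the condition on $\eta$ sending each $f_i(t)\in T_{m_i}$ into $S_{m_i}$ --- namely that $\eta^2\zeta^{-1}$ be a non-square in $C_{q+1}$, which already holds at $\eta=1$ --- is independent of $i$, so the orbit meets $\prod_i T_{m_i}$ if and only if it meets $\prod_i S_{m_i}$; hence there are either no real or $\zeta$-real classes, or exactly one of each. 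In (iii), all $m_i$ are even but some coordinate of the orbit lies outside $\cU_q[t^{|q+1|_2}]$; by Lemma~\ref{OrbitSize2}(iii) that coordinate lies in $T_{m_j}$ throughout the orbit or in $S_{m_j}$ throughout the orbit, but never both, so no orbit contains both a real and a $\zeta$-real class; and if it meets $\prod_i T_{m_i}$ at $(f_i(t))$, choosing $\eta\in Z$ of order $b$, where $b$ is the least power of $2$ with some coordinate outside $\cU_q[t^b]$ (so $2\le b\le |q+1|_2$ and all coordinates lie in $\cU_q[t^{b/2}]$), produces a second element $\eta.(f_i(t))$ of $\prod_i T_{m_i}$, giving exactly two real classes, and symmetrically two $\zeta$-real classes in the other alternative; the three alternatives this leaves are precisely those claimed.

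The main obstacle is the coordination across the sequence: each single-polynomial lemma governs one coordinate only, and the substance of the proof is that the element of $Z$ realizing the relevant orbit move (into a self-reciprocal or $\zeta$-self-reciprocal polynomial, or to a second such) can be chosen uniformly in $i$ --- which in cases (ii) and (iii) is exactly what Lemma~\ref{STOrbit} and the $Z$-stability of $\cU_q[t^{|q+1|_2}]$ provide --- while the ``at most two'' count is then forced by the cyclicity of $Z/H$.
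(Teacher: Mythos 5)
Your proof is correct and follows essentially the same route as the paper's: the same three-case analysis, the same appeals to Lemmas \ref{OrbitSize}, \ref{STOrbit}, and \ref{OrbitSize2}, the same use of $(-1)$ in case (i) and of an $\eta$ of order $b$ (the least power of $2$ with some coordinate outside $\cU_q[t^b]$) in case (iii). Your one refinement --- deducing the ``at most two'' bound for whole sequences from the fact that every qualifying coset $\eta H$ squares to the identity in the cyclic group $Z/H$ --- is a cleaner way to handle the coordination across coordinates than the paper's direct coordinate-wise citation of Lemma \ref{OrbitSize}(ii), but it is a sharpening of the same argument rather than a different one.
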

\begin{proof} If $m_j = \deg(f_j(t))$ is odd, then $\eta.f_j(t)$ is never $\zeta$-self-reciprocal for any $\eta \in C_{q+1}$.  Thus $[(f_i(t))]$ cannot contain any $\zeta$-real classes.  Suppose $[(f_i(t))]$ contains a real class, and without loss of generality suppose $(f_i(t))$ is a real class, so that every $f_i(t)$ is self-reciprocal.  By Lemma \ref{OrbitSize}(ii), there can be at most one other real class in $[(f_i(t))]$.  As in Lemma \ref{OrbitSize2}(i), we have $(-1).f_i(t)$ is self-reciprocal for each $i$.  Since $m_j = \deg(f_j(t))$ is odd, then $(-1).f_j(t) \neq f_j(t)$, and so $(-1).(f_i(t)) \neq (f_i(t))$.  Thus $[(f_i(t))]$ contains two real classes.

Now suppose all $m_i$ are even and all $f_i(t) \in \cU_q[t^{|q+1|_2}]$.  If $(f_i(t))$ is a $\zeta$-real class (or a real class, respectively), we may choose $\eta \in C_{q+1}$ as in Lemma \ref{STOrbit}(i) (Lemma \ref{STOrbit}(ii), respectively) so that $\eta.(f_i(t))$ is a real class (a $\zeta$-real class, respectively).  Now (ii) follows from Lemma \ref{OrbitSize2}(ii).

Finally, suppose all $m_i$ are even and some $f_j(t) \not\in \cU_q[t^{|q+1|_2}]$.  Note that from Lemma \ref{STOrbit}, if $[(f_i(t))]$ contains a real class, then it cannot contain a $\zeta$-real class and vice versa.  Now let $b$ be the smallest power of $2$ such that all $f_i(t) \not\in \cU_q[t^{b}]$, and let $\eta \in C_{q+1}$ such that $|\eta|=b$.  As in the proof of Lemma \ref{OrbitSize2}(iii), if $(f_i(t))$ is a real class (or a $\zeta$-real class), then $\eta.(f_i(t))$ is a distinct real class (or a $\zeta$-real class).  The statement now follows from Lemma \ref{OrbitSize2}(iii).
\end{proof}

We may now give the following enumeration of real conjugacy classes in $\PGU(n, \FF_q)$.

\begin{lemma} \label{PGUPGLReal}
Let $\nu = (1^{m_1} 2^{m_2} \cdots )$ be a partition of $n$, and let $pgu_{\nu}$ be the number of real conjugacy classes of $\PGU(n, \FF_q)$ of type $\nu$.  Then we have
$$pgu_{\nu} = \left\{ \begin{array}{ll} \displaystyle\prod_{i: m_i > 0} r_{q, m_i} & \text{ if } q \text{ is even, or if } q \text{ is odd and } m_i \text{ is even for all } i, \\ \displaystyle\frac{1}{2} \prod_{i: m_i > 0} r_{q, m_i} & \text{ if } q \text{ is odd and } m_i \text{ is odd for some } i. \end{array} \right.$$
Moreover, if $pgl_{\nu}$ is the number of real classes of $\PGL(n, \FF_q)$ of type $\nu$, then $pgl_{\nu} = pgu_{\nu}$.  In particular, the number of real classes of $\PGL(n, \FF_q)$ is equal to the number of real classes of $\PGU(n, \FF_q)$.
\end{lemma}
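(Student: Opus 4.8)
The plan is to compute $pgu_{\nu}$ directly from the orbit classification of Lemma \ref{PGUReal}, then to observe that the number $pgl_{\nu}$ of real classes of $\PGL(n,\FF_q)$ of type $\nu$ is given by the same formula because it is computed by the same argument, and finally to sum over $\nu \in \cP_n$.

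First I would record the dictionary between real classes of $\bar{G} = \PGU(n,\FF_q)$ and $Z$-orbits: a class $gZ$ is real in $\bar{G}$ if and only if the $Z$-orbit of the $\U(n,\FF_q)$-class of $g$ contains a real class or a $\zeta$-real class. One direction is Lemma \ref{RealLift}; the other is immediate, since $\zeta \in Z$, so $g$ conjugate to $g^{-1}$ or to $\zeta g^{-1}$ in $\U(n,\FF_q)$ forces $gZ$ conjugate to $g^{-1}Z$ in $\bar{G}$. Thus $pgu_{\nu}$ is the number of $Z$-orbits of type-$\nu$ sequences $(f_i(t))$ that meet the set of real or $\zeta$-real classes. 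I would also note, from \eqref{RealClassesGen}, that the number of real classes of $\U(n,\FF_q)$ of type $\nu$ is $A_{\nu} := \prod_{i:\, m_i > 0} r_{q,m_i}$, and that the number of $\zeta$-real classes of type $\nu$ is $\prod_{i:\, m_i > 0} r^{\zeta}_{q,m_i}$, which by \eqref{ZetaSelfCount} and Lemma \ref{ZetaUSelfCount} equals $A_{\nu}$ if every $m_i$ is even and is $0$ if some $m_i$ is odd.

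Then I would go through the three cases. If $q$ is even, then $q+1$ is odd so there is no non-square in $C_{q+1}$, every real class of $\bar{G}$ lifts to a real element of $\U(n,\FF_q)$, and by Lemma \ref{OrbitSize}(i) each $Z$-orbit contains at most one real class; hence $pgu_{\nu} = A_{\nu}$. If $q$ is odd and some $m_i$ is odd, then by Lemma \ref{PGUReal}(i) no orbit contains a $\zeta$-real class and every orbit that contains a real class contains exactly two, so $pgu_{\nu} = \tfrac12 A_{\nu}$. If $q$ is odd and every $m_i$ is even, I would split the $Z$-orbits of type-$\nu$ sequences according to whether every polynomial in every representative lies in $\cU_q[t^{|q+1|_2}]$ (the orbits of Lemma \ref{PGUReal}(ii)) or not (those of Lemma \ref{PGUReal}(iii)); this dichotomy descends to orbits because the $C_{q+1}$-action maps $\cU_q[t^{|q+1|_2}]$ into itself. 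Let $a,b$ be the numbers of real classes of type $\nu$ lying in orbits of the first and second kind, so $a + b = A_{\nu}$, and let $a',b'$ be the analogous counts for $\zeta$-real classes, so $a' + b' = A_{\nu}$ as well. By Lemma \ref{PGUReal}(ii) an orbit of the first kind meets the real classes if and only if it meets the $\zeta$-real classes, in each case in a single class, forcing $a = a'$ and hence $b = b'$. By Lemma \ref{PGUReal}(iii) an orbit of the second kind that meets the real classes contains exactly two of them and no $\zeta$-real class, and symmetrically. The orbits contributing to $pgu_{\nu}$ are the $a$ orbits of the first kind with a real class together with the $b/2$ orbits of the second kind with real classes and the $b'/2$ with $\zeta$-real classes, giving $pgu_{\nu} = a + b/2 + b'/2 = a + b = A_{\nu}$, which matches the claimed formula.

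For $pgl_{\nu} = pgu_{\nu}$, the development of \cite[Section 2]{GiSi112} classifies the $Z$-orbits for $\PGL(n,\FF_q)$ in exactly the trichotomy of Lemma \ref{PGUReal} (this is \cite[Lemma 2.6]{GiSi112}), with $\FF_q^{\times}$ and $|q-1|_2$ in place of $C_{q+1}$ and $|q+1|_2$, and with self-reciprocal (resp.\ $\zeta$-self-reciprocal) polynomials in $M_q[t]$ in place of those in $\cU_q[t]$. The computation above uses nothing about these polynomial sets beyond the degree-by-degree counts $r_{q,d}$ and $r^{\zeta}_{q,d}$, and by \eqref{SelfDualSets} and Lemma \ref{ZetaUSelfCount} those counts agree in the two settings, so the identical three-case count gives $pgl_{\nu} = pgu_{\nu}$; summing over $\nu \in \cP_n$ then shows that $\PGL(n,\FF_q)$ and $\PGU(n,\FF_q)$ have equally many real classes. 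I expect the main obstacle to be the case $q$ odd with every $m_i$ even: one has to keep careful account of which orbits contain real versus $\zeta$-real classes and with what multiplicity, and the decisive cancellation $a + b/2 + b'/2 = A_{\nu}$ relies on $a = a'$ (Lemma \ref{PGUReal}(ii)) together with $r^{\zeta}_{q,d} = r_{q,d}$ for even $d$ (Lemma \ref{ZetaUSelfCount}), which is exactly why that lemma is proved.
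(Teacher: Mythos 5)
Your proposal is correct and follows essentially the same route as the paper: the same trichotomy ($q$ even; $q$ odd with some $m_i$ odd; $q$ odd with all $m_i$ even) driven by Lemmas \ref{OrbitSize}, \ref{PGUReal}, and \ref{ZetaUSelfCount}, with the key point in the last case being that real and $\zeta$-real classes of type $\nu$ are equinumerous and each contributing $Z$-orbit contains exactly two of them. Your explicit $a,b,a',b'$ bookkeeping is just a slightly more detailed rendering of the paper's pairing argument, and the comparison with $pgl_{\nu}$ via Gill and Singh's parallel classification matches the paper's citation of their Corollary 2.7 and Theorem 2.8.
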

\begin{proof} Let $(f_i(t)) = (f_1(t), f_2(t), \ldots)$ correspond to a conjugacy class of type $\nu$ in $\U(n, \FF_q)$.  First, if $q$ is even, then it follows from Lemma \ref{OrbitSize}(i) that the $Z$-orbit $[(f_i(t))]$ can contain at most one real class and no $\zeta$-real classes.  It follows that the real classes of $\PGU(n, \FF_q)$ of type $\nu$ are in bijection with the real classes of $\U(n, \FF_q)$ of type $\nu$.  

We now assume that $q$ is odd.  If some $m_i = \deg(f_i(t))$ is odd, then by Lemma \ref{PGUReal}(i), the $Z$-orbit $[(f_i(t))]$ contains no $\zeta$-real classes, and either contains no or two real classes.  Thus the real classes of $\PGU(n, \FF_q)$ of type $\nu$ correspond to pairs of real classes of $\U(n, \FF_q)$ of type $\nu$, and so there are half as many real classes of $\PGU(n, \FF_q)$ of type $\nu$ as there are of $\U(n, \FF_q)$.

Finally, suppose every $m_i = \deg(f_i(t))$ is even, and assume the $Z$-orbit $[(f_i(t))]$ corresponds to a real class of $\PGU(n, \FF_q)$, and so contains either a real or a $\zeta$-real class of $\U(n, \FF_q)$.   Then by Lemma \ref{PGUReal}(ii) and (iii), $[(f_i(t))]$ contains either two real classes of $\U(n, \FF_q)$, two $\zeta$-real classes, or one of each.  By Lemma \ref{ZetaUSelfCount}, the number of $\zeta$-self-dual polynomials of degree $m_i$ in $\cU_q[t]$ is equal to the number of self-dual polynomials of degree $m_i$ in $\cU_q[t]$ (since $m_i$ is even), which is $r_{q, m_i}$.  So there are an equal number of $\zeta$-real classes and real classes of type $\nu$ in $\PGU(n, \FF_q)$ in this case, each of which are given by $\prod_{i: m_i > 0} r_{q, m_i}$.  Since sequences of such polynomials are paired to form the real classes of $\PGU(n, \FF_q)$ of type $\nu$, the result follows.  

In all cases, this matches the number of real classes in $\PGL(n, \FF_q)$ of type $\nu$ obtained by Gill and Singh in \cite[Corollary 2.7 and Theorem 2.8]{GiSi112}, and so the number of real classes of $\PGL(n, \FF_q)$ is equal to the number of real classes of $\PGU(n, \FF_q)$.
\end{proof}

\section{Main Result} \label{MainResult}

We finally arrive at our main result.

\begin{theorem} \label{main}  Let $q$ be any prime power.  Then for any $n \geq 0$ we have
\begin{align*}
& \text{ the number of real classes in } \PGL(n, \FF_q) \\
=&\text{ the number of real classes of } \GL(n, \FF_q) \text{ contained in } \SL(n, \FF_q) \\
=&\text{ the number of real classes in } \PGU(n, \FF_q) \\
=&\text{ the number of real classes of } \U(n, \FF_q) \text{ contained in } \SU(n, \FF_q).
\end{align*}
If we take $e = e(q) = (q-1, 2)$, then the generating function for this common quantity is given by
$$ \frac{1}{2} \left( \prod_{i = 1}^{\infty} \frac{(1+u^i)^e}{1-qu^{2i}} + \prod_{i=1}^{\infty} \frac{1 + u^{ei}}{1-qu^{2i}} \right).$$
\end{theorem}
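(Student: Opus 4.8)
The plan is as follows. By Lemma~\ref{GLSLGUSU} the number of real classes of $\GL(n,\FF_q)$ contained in $\SL(n,\FF_q)$ equals the number of real classes of $\U(n,\FF_q)$ contained in $\SU(n,\FF_q)$, and by Lemma~\ref{PGUPGLReal} the number of real classes of $\PGL(n,\FF_q)$ equals the number of real classes of $\PGU(n,\FF_q)$; so the only remaining content is the equality of the number of real classes of $\PGL(n,\FF_q)$ with the number of real classes of $\GL(n,\FF_q)$ contained in $\SL(n,\FF_q)$, together with the stated generating function. I would prove this by computing, for a fixed prime power $q$, the generating function $\sum_{n\ge 0}(\,\cdot\,)u^n$ for each of these two quantities and checking that they coincide and equal the displayed expression. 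The bookkeeping is uniform: a partition $\nu=(1^{m_1}2^{m_2}\cdots)$ of $n$ is a choice of multiplicities $m_i\ge 0$ with $\sum_i im_i=n$, so a count of the shape $\sum_{n\ge0}u^n\sum_{\nu\in\cP_n}\prod_{i:\,m_i>0}g(i,m_i)$ equals $\prod_{i\ge1}\bigl(1+\sum_{m\ge1}g(i,m)u^{im}\bigr)$, and the inner sums are read off from \eqref{SelfRecCount}; in particular $\sum_{m\ge0}r_{q,m}x^m=(1+x)^e/(1-qx^2)$ (already used in \eqref{RealClassesGen}) and, for $q$ odd, $\sum_{k\ge1}r_{q,2k}x^{2k}=(q+1)x^2/(1-qx^2)$.

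For $q$ even the claim is immediate. By Lemma~\ref{SLnuRevised} every real class of $\GL(n,\FF_q)$ already lies in $\SL(n,\FF_q)$, so the number of real classes of $\GL(n,\FF_q)$ in $\SL(n,\FF_q)$ of type $\nu$ is $\prod_{i:\,m_i>0}r_{q,m_i}$, which by Lemma~\ref{PGUPGLReal} is also $pgl_\nu=pgu_\nu$. Hence both generating functions equal $\prod_{i\ge1}(1+u^i)/(1-qu^{2i})$ by \eqref{RealClassesGen}, and this is the displayed expression with $e=1$, its two products being identical.

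For $q$ odd, Lemma~\ref{SLnuRevised} writes $sl_\nu$ as $\tfrac12\prod_{i:\,m_i>0}r_{q,m_i}$ plus, whenever $m_i$ is even for every odd $i$, the extra term $\tfrac12\prod_{i\text{ odd}:\,m_i>0}\tfrac{q-1}{q+1}r_{q,m_i}\prod_{i\text{ even}:\,m_i>0}r_{q,m_i}$. Summing over $\nu$ and over $n$, the first term contributes $\tfrac12 A$ with $A=\prod_{i\ge1}(1+u^i)^2/(1-qu^{2i})$, and the extra term contributes $\tfrac12 B$, where in the product $B$ the factor at an odd index $i$ is $1+\tfrac{q-1}{q+1}\sum_{k\ge1}r_{q,2k}u^{2ik}=(1-u^{2i})/(1-qu^{2i})$ and the factor at an even index $i$ is $\sum_{m\ge0}r_{q,m}u^{im}=(1+u^i)^2/(1-qu^{2i})$. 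The same computation applied to Lemma~\ref{PGUPGLReal}, where $pgl_\nu=pgu_\nu$ equals $\tfrac12\prod_{i:\,m_i>0}r_{q,m_i}$ plus the same product again exactly when every $m_i$ is even, gives generating function $\tfrac12 A+\tfrac12 C$ with $C=\prod_{i\ge1}\bigl(1+\sum_{k\ge1}r_{q,2k}u^{2ik}\bigr)=\prod_{i\ge1}(1+u^{2i})/(1-qu^{2i})$. It therefore remains to check $B=C$, i.e.\ $\prod_{i\text{ odd}}(1-u^{2i})\prod_{i\text{ even}}(1+u^i)^2=\prod_{i\ge1}(1+u^{2i})$; with $v=u^2$ this becomes $\prod_{i\text{ odd}}(1-v^i)\prod_{j\ge1}(1+v^j)=1$, which is Euler's identity $\prod_{j\ge1}(1+v^j)=\prod_{i\text{ odd}}(1-v^i)^{-1}$. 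So both quantities have generating function $\tfrac12(A+B)$, which is the displayed expression with $e=2$. (The case $n=0$ is trivial, all four counts equaling $1$, consistent with the constant term of the generating function.)

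I expect most of this to be routine power-series manipulation; the step needing genuine care is the closed-form identification of $B$ and $C$, namely the evaluations of $\sum_{m\ge0}r_{q,m}x^m$ and $\sum_{k\ge1}r_{q,2k}x^{2k}$ from \eqref{SelfRecCount} and the recognition of $\prod_{i\text{ odd}}(1-u^{2i})\prod_{i\text{ even}}(1+u^i)^2=\prod_{i\ge1}(1+u^{2i})$ via Euler's partition identity. One must also be scrupulous about the ``$m_i=0$'' constant terms when forming the infinite products, since in Lemma~\ref{SLnuRevised} the weight $\tfrac{q-1}{q+1}r_{q,m_i}$ is attached only to the parts that actually appear in $\nu$.
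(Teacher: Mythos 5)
Your proposal is correct and follows essentially the same route as the paper: reduce to the equality of the $\PGL$ and $\SL$ counts via Lemmas \ref{PGUPGLReal} and \ref{GLSLGUSU}, then compute both generating functions from Lemma \ref{SLnuRevised} and the $pgl_\nu$ formula and match them using the same Euler-type identity $\prod_{i \text{ odd}}(1-v^i)\prod_{j\geq 1}(1+v^j)=1$ (your $B=C$ step is literally the paper's $\prod_i (1+u^{2i})^2(1-u^{4i-2})=\prod_i(1+u^{2i})$). Your bookkeeping is marginally cleaner in that you write $sl_\nu$ as a base term plus an indicator-weighted correction, so the intermediate product the paper computes in \eqref{firstpart} cancels before it ever needs to be evaluated, but the substance is identical.
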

\begin{proof}   The first and third quantities are equal by Lemma \ref{PGUPGLReal}, while the second and fourth are equal by Lemma \ref{GLSLGUSU}.  So we just need to show the number of real classes in $\PGL(n, \FF_q)$ is equal to the number of real classes of $\GL(n, \FF_q)$ which are contained in $\SL(n, \FF_q)$.  When $q$ is even, these were already observed to be equal by Gill and Singh \cite[after Theorem 2.8]{GiSi112}, and are both equal to the number of real classes of $\GL(n, \FF_q)$ (by Lemma \ref{SLnuRevised}).  The generating function for this quantity is given by \eqref{RealClassesGen} with $e=1$, which gives our claim in this case.

We may now assume $q$ is odd, and we first calculate a generating function for the number of real classes in $\PGL(n, \FF_q)$.  If $pgl_{\nu}$ is the number of real conjugacy classes of $\PGL(n, \FF_q)$ of type $\nu$, then as stated in Lemma \ref{PGUPGLReal} we have by Gill and Singh \cite[Corollary 2.7]{GiSi112} that
$$pgl_{\nu} = \left\{ \begin{array}{ll} \displaystyle\prod_{i: m_i > 0} r_{q, m_i} & \text{ if } m_i \text{ is even for all } i, \\ \displaystyle\frac{1}{2} \prod_{i: m_i > 0} r_{q, m_i} & \text{ if } m_i \text{ is odd for some } i. \end{array} \right.$$
Applying this and the fact that
$$ \sum_{\nu \in \cP_n \atop{\exists i: m_i \text{ odd}}} \prod_{i: m_i >0} r_{q, m_i} = \sum_{\nu \in \cP_n} \prod_{i: m_i >0} r_{q, m_i} - \sum_{\nu \in \cP_n \atop{\text{all } m_i \text{ even}}} \prod_{i: m_i > 0} r_{q, m_i},$$
it follows that the generating function that we want may be written as
\begin{align}
\sum_{n \geq 0} \left( \sum_{\nu \in \cP_n} pgl_{\nu} \right) u^n & = \sum_{n \geq 0} u^n \left( \sum_{\nu \in \cP_n \atop{\text{all } m_i \text{ even}}} \prod_{i: m_i > 0} r_{q, m_i} + \frac{1}{2} \sum_{\nu \in \cP_n \atop{\exists i: m_i \text{ odd}}} \prod_{i: m_i > 0} r_{q, m_i} \right) \nonumber\\
& = \sum_{n \geq 0} u^n \left( \frac{1}{2} \sum_{\nu \in \cP_n} \prod_{i: m_i >0} r_{q, m_i} + \frac{1}{2} \sum_{\nu \in \cP_n \atop{\text{all } m_i \text{ even}}} \prod_{i: m_i > 0} r_{q, m_i} \right). \label{pglgoal}
\end{align}
From \eqref{RealClassesGen} we have
\begin{equation} \label{firsthalfpgl}
\sum_{n \geq 0} \left( \sum_{\nu \in \cP_n} \prod_{i: m_i >0} r_{q, m_i} \right) u^n = \prod_{i =1}^{\infty} \frac{(1+u^i)^2}{1-qu^{2i}}.
\end{equation}
Next, we have
\begin{align}
\sum_{n \geq 0} u^n \left( \sum_{\nu \in \cP_n \atop{\text{all } m_i \text{ even}}} \prod_{i: m_i > 0} r_{q, m_i} \right) & = \prod_{i = 1}^{\infty} \sum_{k \geq 0} (u^i)^{2k} r_{q, 2k}  = \prod_{i=1}^{\infty} \left(1 + \sum_{k \geq 1} (q^k + q^{k-1}) u^{2ik} \right) \nonumber\\
& = \prod_{i=1}^{\infty} \left( \sum_{k \geq 0} (qu^{2i})^k + \sum_{k \geq 0} u^{2i} (qu^{2i})^k \right) \nonumber\\
& = \prod_{i=1}^{\infty} \left(\frac{1}{1 - qu^{2i}} + \frac{u^{2i}}{1 - qu^{2i}} \right) = \prod_{i=1}^{\infty} \frac{1+u^{2i}}{1-qu^{2i}}. \label{halfpgl}
\end{align}
Substituting \eqref{firsthalfpgl} and \eqref{halfpgl} into \eqref{pglgoal} yields the claimed generating function.

As in Section \ref{RealClassesSLSU}, let $sl_{\nu}$ denote the number of real classes of $\GL(n, \FF_q)$ of type $\nu$ which are contained in $\SL(n, \FF_q)$.  We now compute the generating function for the number of real classes of $\GL(n, \FF_q)$ which are contained in $\SL(n, \FF_q)$, which is given by
\begin{equation} \label{slgoal}
\sum_{n \geq 0} \left( \sum_{\nu \in \cP_n} sl_{\nu} \right) u^n = \sum_{n \geq 0} u^n \left(  \sum_{\nu \in \cP_n \atop{m_i \text{ even } \forall i \text{ odd}}} sl_{\nu} + \sum_{\nu \in \cP_n \atop{\exists i \text { odd}: m_i \text{ odd}}} sl_{\nu} \right).
\end{equation} 
Consider the first sum in the parentheses of \eqref{slgoal}.  By applying Lemma \ref{SLnuRevised}, we have
\begin{align} \label{slfirsthalf} 
\sum_{n \geq 0} & u^n \left(  \sum_{\nu \in \cP_n \atop{m_i \text{ even } \forall i \text{ odd}}} sl_{\nu} \right) \nonumber\\ 
& = \frac{1}{2} \left( \sum_{n \geq 0} u^n \sum_{\nu \in \cP_n \atop{m_i \text{ even } \forall i \text{ odd}}}  \prod_{i: m_i > 0} r_{q, m_i}  + \sum_{n \geq 0} u^n \sum_{\nu \in \cP_n \atop{m_i \text{ even } \forall i \text{ odd}}} \prod_{i \text{ odd}: \atop{ m_i > 0}} \frac{q-1}{q+1} r_{q, m_i} \prod_{i \text{ even}: \atop{ m_i > 0}} r_{q, m_i} \right) \nonumber\\
& = \frac{1}{2} \left( \prod_{i = 1}^{\infty} \left(\sum_{k \geq 0} (u^{2i})^k r_{q, k} \right) \prod_{i=1}^{\infty} \left(\sum_{k \geq 0} (u^{2i-1})^{2k} r_{q, 2k} \right) \right. \nonumber\\
& \quad \quad \quad \left. + \prod_{i = 1}^{\infty} \left(\sum_{k \geq 0} (u^{2i})^{k} r_{q, k} \right) \prod_{i=1}^{\infty} \left(1 + \sum_{k \geq 1} (u^{2i-1})^{2k} \frac{q-1}{q+1} r_{q, 2k} \right) \right).
\end{align}
Note that
\begin{equation} \label{firstpart}
\prod_{i=1}^{\infty} \left( \sum_{k \geq 0} (u^{2i})^k r_{q, k} \right) = \sum_{n \geq 0} \left( \sum_{\nu \in \cP_n} \prod_{i: m_i > 0} r_{q, m_i} \right) u^{2n} = \prod_{i = 1}^{\infty} \frac{(1+u^{2i})^2}{1 - qu^{4i}},
\end{equation}
by substituting $u^2$ for $u$ in \eqref{RealClassesGen}.  Next we compute
\begin{align} \label{secondpart}
\sum_{k \geq 0} (u^{2i-1})^{2k} r_{q, 2k}  & = 1 + \sum_{k \geq 1} (q^k + q^{k-1})  (u^{2i-1})^{2k} \nonumber \\ & = \sum_{k \geq 0} (q u^{4i-2})^k + \sum_{k \geq 0} u^{4i-2} (qu^{4i-2})^k \nonumber \\
& = \frac{1+u^{4i-2}}{1 - qu^{4i-2}}.
\end{align}
Very similarly, we have
\begin{align} \label{thirdpart}
1 + \sum_{k \geq 1} \frac{q-1}{q+1} r_{q, 2k} (u^{2i-1})^{2k} & = 1 + \sum_{k \geq 1} (q^k - q^{k-1})(u^{2i-1})^{2k} \nonumber \\
& = \sum_{k \geq 0} (qu^{4i-2})^k - \sum_{k \geq 0} u^{4i-2} (qu^{4i-2})^k \nonumber \\
& = \frac{1 - u^{4i-2}}{1 - qu^{4i-2}}.
\end{align}
Substituting \eqref{firstpart}, \eqref{secondpart}, and \eqref{thirdpart} into \eqref{slfirsthalf}, we obtain
\begin{align} \label{slfirsthalf2}
\sum_{n \geq 0}  u^n \left(  \sum_{\nu \in \cP_n \atop{m_i \text{ even } \forall i \text{ odd}}} sl_{\nu} \right) & = \frac{1}{2} \left( \prod_{i = 1}^{\infty} \frac{(1 + u^{2i})^2}{1 - qu^{4i}} \frac{1 + u^{4i-2}}{1 - qu^{4i-2}} + \prod_{i = 1}^{\infty} \frac{(1 + u^{2i})^2}{1 - qu^{4i}} \frac{1 - u^{4i-2}}{1 - qu^{4i-2}} \right) \nonumber \\
& = \frac{1}{2} \left( \prod_{i = 1}^{\infty} \frac{(1 + u^{2i})^2}{1 - qu^{2i}} (1 + u^{4i-2}) + \prod_{i = 1}^{\infty} \frac{(1 + u^{2i})^2}{1 - qu^{2i}} (1 - u^{4i-2})\right)
\end{align}
Now note that
\begin{align*}
\prod_{i=1}^{\infty} (1 + u^{2i})^2 (1 - u^{4i-2}) & = \prod_{i = 1}^{\infty} \frac{(1+u^{2i})(1-u^{4i})(1-u^{4i-2})}{1-u^{2i}} \\
& = \prod_{i = 1}^{\infty} \frac{1 +u^{2i}}{1-u^{2i}} (1-u^{2i})\\
& = \prod_{i=1}^{\infty} (1 + u^{2i}).
\end{align*}
Using this, \eqref{slfirsthalf2} becomes
\begin{equation} \label{slfirsthalf3}
\sum_{n \geq 0}  u^n \left(  \sum_{\nu \in \cP_n \atop{m_i \text{ even } \forall i \text{ odd}}} sl_{\nu} \right)  = \frac{1}{2} \left( \prod_{i = 1}^{\infty} \frac{(1 + u^{2i})^2}{1 - qu^{2i}} (1 + u^{4i-2}) + \prod_{i = 1}^{\infty} \frac{1+u^{2i}}{1 - qu^{2i}} \right).
\end{equation}
For the second sum in the parentheses of \eqref{slgoal}, we again apply Lemma \ref{SLnuRevised} and compute
\begin{align} \label{slsecondhalf} 
\sum_{n \geq 0} & u^n \left(  \sum_{\nu \in \cP_n \atop{\exists i \text { odd}: m_i \text{ odd}}} sl_{\nu} \right) = \frac{1}{2}\sum_{n \geq 0}u^n \sum_{\nu \in \cP_n \atop{\exists i \text { odd}: m_i \text{ odd}}} \prod_{i: m_i > 0} r_{q, m_i} \nonumber\\
& = \frac{1}{2} \sum_{n \geq 0} u^n \left( \sum_{\nu \in \cP_n} \prod_{i: m_i > 0} r_{q, m_i} - \sum_{\nu \in \cP_n \atop{m_i \text{ even } \forall i \text{ odd}}} \prod_{i: m_i > 0} r_{q, m_i} \right) \nonumber\\
& = \frac{1}{2} \left( \prod_{i = 1}^{\infty} \left( \sum_{k \geq 0} (u^i)^k r_{q, k} \right) - \prod_{i =1}^{\infty} \left( \sum_{k \geq 0} (u^{2i})^k r_{q, k} \right) \prod_{i = 1}^{\infty} \left(\sum_{k \geq 0} (u^{2i-1})^{2k} r_{q, 2k} \right) \right).
\end{align}
Now substitute \eqref{RealClassesGen}, \eqref{firstpart}, and \eqref{secondpart} for each of the infinite products in \eqref{slsecondhalf}.  This yields
\begin{align} \label{slsecondhalf2}
\sum_{n \geq 0}  u^n \left(  \sum_{\nu \in \cP_n \atop{\exists i \text { odd}: m_i \text{ odd}}} sl_{\nu} \right) & = \frac{1}{2} \left( \prod_{i =1}^{\infty} \frac{(1 + u^i)^2}{1 - qu^{2i}} - \prod_{i = 1}^{\infty}  \frac{(1 + u^{2i})^2}{1 - qu^{4i}} \frac{1 + u^{4i-2}}{1 - qu^{4i-2}}  \right) \nonumber\\
& = \frac{1}{2} \left( \prod_{i =1}^{\infty} \frac{(1 + u^i)^2}{1 - qu^{2i}} - \prod_{i = 1}^{\infty} \frac{(1 + u^{2i})^2}{1 - qu^{2i}} (1 + u^{4i-2}) \right).
\end{align}
Now we take the sum of \eqref{slfirsthalf3} and \eqref{slsecondhalf2}, and \eqref{slgoal} becomes
\begin{equation} \label{slaccomplished}
\sum_{n \geq 0} \left( \sum_{\nu \in \cP_n} sl_{\nu} \right) u^n = \frac{1}{2} \left( \prod_{i = 1}^{\infty} \frac{(1+u^i)^2}{1-qu^{2i}} + \prod_{i=1}^{\infty} \frac{1 + u^{2i}}{1-qu^{2i}} \right),
\end{equation}
which matches our generating function for the number of real classes in $\PGL(n, \FF_q)$.
\end{proof}

\bigskip

\noindent
\begin{tabular}{ll}
\textsc{Department of Mathematics}\\ 
\textsc{College of William and Mary}\\
\textsc{P. O. Box 8795} \\
\textsc{Williamsburg, VA  23187}\\
{\em e-mail}:   {\tt ecamparo@email.wm.edu, vinroot@math.wm.edu}\\
\end{tabular}
\end{document}